% packages
\documentclass[11pt]{article}
\usepackage{tocbibind} % adds References to the pdf bookmarks, in amsart this seems to be standard, but not in article
\usepackage{amsmath,amsthm,amssymb}
\usepackage{fancyhdr}
\usepackage[british]{babel}
\usepackage{geometry,mathtools}
\usepackage{enumitem}
\usepackage{algpseudocode}
\usepackage{dsfont}
\usepackage{centernot}
\usepackage{xstring}
\usepackage{colortbl}

\usepackage{cancel}

\usepackage[section]{algorithm}
\usepackage{graphicx}
\usepackage[font={footnotesize}]{caption}
\captionsetup{width=1\linewidth}
\usepackage[usenames,dvipsnames,table]{xcolor}
\usepackage{pstricks,tikz}
\usepackage{tkz-euclide}
\tkzSetUpPoint[fill=black, size = 3pt]
\tkzSetUpLine[color=black, line width=0.6pt]
\usepackage{subcaption}
\usepackage[h]{esvect}
\usepackage[
bookmarksopen=true,
bookmarksopenlevel=1,
colorlinks=true,
linkcolor=darkblue,
linktoc=page,
citecolor=darkblue,
urlcolor=darkblue
]{hyperref}
\usepackage{bbm}

\geometry{a4paper,tmargin=2.5cm,bmargin=2.5cm,lmargin=2.5cm,rmargin=2.5cm,headheight=2.5cm,headsep=1cm,footskip=1cm}

%%%%%%%%%%%%%%%%%%%%%%%%%%%%%%%%%%%%%%%%%%%%%%%%%%%%
% style & layout

\hyphenation{quasi-random sub-graph di-graph mul-ti-graph pro-ba-bi-lis-tic ver-ti-ces}

\numberwithin{equation}{section}

\definecolor{darkblue}{rgb}{0,0,0.5}

\newdimen\margin
\def\textno#1&#2\par{
   \margin=\hsize
   \advance\margin by -4\parindent
          \setbox1=\hbox{\sl#1}
   \ifdim\wd1 < \margin
      $$\box1\eqno#2$$
   \else
      \bigbreak
      \hbox to \hsize{\indent$\vcenter{\advance\hsize by -3\parindent
      \it\noindent#1}\hfil#2$}
      \bigbreak
   \fi}

%%%%%%%%%%%%%%%%%%%%%%%%%%%%%%%%%%%%%%%%%%%%%%%%%%%%
% environments

\newtheorem{theorem}[algorithm]{Theorem}
\newtheorem{prop}[algorithm]{Proposition}
\newtheorem{lemma}[algorithm]{Lemma}

\newtheorem{fact}[algorithm]{Fact}

\theoremstyle{definition}

\newtheorem{example}[algorithm]{Example}

%\def\proof{\removelastskip\penalty55\medskip\noindent\begin{stepenv}\end{stepenv}{\bf Proof. }} % in each main proof, claim and step counter set back

 % in each main proof, claim and step counter set back

\newenvironment{claimproof}[1][Proof]
  {\begin{trivlist}
   \item[\hskip\labelsep \textit{#1.}]
   \ignorespaces}
  {\hfill$\blacksquare$\end{trivlist}}

\newcounter{stepenv}
\newenvironment{stepenv}[1][]{\refstepcounter{stepenv}}{}

\newcounter{step}[stepenv]

\newcounter{substep}[step]
\renewcommand{\thesubstep}{\thestep.\arabic{substep}}

\newcounter{claim}[stepenv]
\newenvironment{claim}[1][]{\refstepcounter{claim}\par\medskip\noindent%
        \textit{Claim~\theclaim. #1} \itshape\rmfamily}{\medskip}

%%%%%%%%%%%%%%%%%%%%%%%%%%%%%%%%%%%%%
% font letters

\newcommand{\cF}{\mathcal{F}}

\newcommand{\cH}{\mathcal{H}}
\newcommand{\cI}{\mathcal{I}}
\newcommand{\cJ}{\mathcal{J}}
\newcommand{\cK}{\mathcal{K}}

\newcommand{\cP}{\mathcal{P}}

\newcommand{\cS}{\mathcal{S}}

\newcommand{\cY}{\mathcal{Y}}

\newcommand{\bN}{\mathbb{N}}

\newcommand{\EE}{\mathbb{E}}
\newcommand{\PP}{\mathbb{P}}

\def\eps{{\varepsilon}}

%%%%%%%%%%%%%%%%%%%%%%%%%%%%%%%%%%%%%
% macros 

\newcommand{\defn}{\emph}

%%%%%%%%%%%%%%%%%%%%%%%%%%%%%%%%%%%%
% miscellaneous 

\def\COMMENT#1{}
\def\TASK#1{}
\let\TASK=\footnote             % COMMENT OUT for clean output
\let\COMMENT=\footnote          % COMMENT OUT for clean output

%%%%%%%%%%%%%%%%%%%%%%%%%%%%%%%%%%%%%%%%%%%%%%%%%%%%%%%%%%%%%%%%%%%%%%%%%%%%%%%%%%%

\begin{document}

\title{Steiner triple systems with high discrepancy}

\author{
Lior Gishboliner \thanks{Department of Mathematics, University of Toronto, Canada.
\emph{Email}:  \href{mailto:lior.gishboliner@utoronto.ca}{\tt lior.gishboliner@utoronto.ca}. In the beginning of this work, LG was supported by SNSF grant 200021\_196965.} 
\and
Stefan Glock \thanks{Fakultät für Informatik und Mathematik, Universität Passau, Germany.
\emph{Email}: \href{mailto:stefan.glock@uni-passau.de}{\tt stefan.glock@uni-passau.de}, \href{mailto:amedeo.sgueglia@uni-passau.de}{\tt amedeo.sgueglia@uni-passau.de}.
SG is funded by the Deutsche Forschungsgemeinschaft (DFG, German Research Foundation) – 542321564,
AS is funded by the Alexander von Humboldt Foundation.}
\and
Amedeo Sgueglia \footnotemark[2]
}

\date{}

\maketitle

\begin{abstract} 
In this paper, we initiate the study of discrepancy questions for combinatorial designs. Specifically, we show that, for every fixed $r\ge 3$ and $n\equiv 1,3 \pmod{6}$, any $r$-colouring of the triples on $[n]$ admits a Steiner triple system of order $n$ with discrepancy $\Omega(n^2)$. This is not true for $r=2$, but we are able to asymptotically characterise all $2$-colourings which do not contain a Steiner triple system with high discrepancy. The key step in our proofs is a characterization of 3-uniform hypergraphs avoiding a certain natural type of induced subgraphs, contributing to the structural theory of hypergraphs.
\end{abstract}

\section{Introduction}

Discrepancy theory aims to answer the following question:
Given a ground set $\Omega$ and a family $\cP \subseteq 2^\Omega$, does there exist an $r$-colouring of $\Omega$ such that each set in $\cP$ contains roughly the same number of elements of each colour?
Formally, let $r \ge 2$ be an integer and $f : \Omega \rightarrow [r]$ be an $r$-colouring of $\Omega$. 
For a set $P \in \cP$ and a colour $c$, let $c(P) := \{x \in P : f(x) = c\}$. 
The discrepancy of $P$ with respect to $f$ is defined as $D_f(P) := r \cdot \nolinebreak \max_{c \in [r]} \left(
|c(P)| - \frac{|P|}{r} \right)$; the larger $D_f(P)$ is, the less balanced is the colouring of $P$. 
The discrepancy of the family $\mathcal{P}$ with respect to $f$ is then defined as $\max_P D_f(P)$. 

Here we consider discrepancy for hypergraphs. In this setting, $\Omega$ is the edge set of a given hypergraph $G$ and $\cP$ is a family of subgraphs of $G$.
Two early results of this type are the works of 
Erd\H{o}s and Spencer~\cite{ES:72} on the discrepancy of cliques in the complete graph, and of Erd\H{o}s, F\"uredi, Loebl and S\'os~\cite{EFLS:95} on the discrepancy of copies of a given spanning tree in the complete graph.
In the last few years, there has been a renewed interest in questions of this type. For ($2$-uniform) graphs, there are by now discrepancy results for many different types of subgraphs, such as perfect matchings and Hamilton cycles~\cite{BCJP:20, FHLT:21,GKM_Hamilton, GKM_trees}, $1$-factorisations~\cite{JFSH:25+}, spanning trees~\cite{GKM_trees}, $H$-factors~\cite{BCPT:21,BCG:23} and powers of Hamilton cycles~\cite{Bradac:22}.
More recently, the analogous question has been studied in the context of hypergraphs for perfect matchings and Hamilton cycles~\cite{BTZ-G:24,GGPS:25+,GGS:23,HLMPSTZ24+,LMX24+}.
Most of these results apply not only in the case of $G$ being the complete graph, but also when $G$ has large minimum degree.

In this paper, we initiate this type of research for combinatorial designs, which form another natural class of spanning hypergraphs. One of the most intensively studied objects in design theory are Steiner triple systems, which also have connections and applications to other areas such as coding theory, geometry, graph decompositions and group theory (see e.g.~\cite{CD:07}).

A \defn{Steiner triple system} (STS for short) of order $n$ is a set $\cS$ of $3$-subsets of $[n]$ such that every $2$-subset of $[n]$ is contained in exactly one of the triples of~$\cS$. Hence, we view an STS as a substructure of the complete $3$-uniform hypergraph $K_{n}^{(3)}$. Due to the required symmetry condition with respect to the $2$-subsets, they are much more complex than the previously studied substructures like perfect matchings and Hamilton cycles. In fact, even the existence question is highly non-trivial.
By a famous result of Kirkman~\cite{kirkman:47}, we know that a Steiner triple system of order $n$ exists if and only if $n\equiv 1,3 \pmod{6}$.
Given the multitude of results concerning discrepancy problems for spanning graphs and hypergraphs, the following question suggests itself:
Assume that $n\equiv 1,3 \pmod{6}$, and consider an $r$-colouring of the $3$-subsets of $[n]$ (corresponding to the edges of $K_n^{(3)}$).
Can we find a Steiner triple system of order $n$ in which one of the colours appears significantly more often than the average?

Recall that an STS of order $n$ has $\binom{n}{2}/3$ triples; therefore, by significant, we mean that the discrepancy is $\Omega(n^2)$. In other words, we would like to have a colour appearing on at least $\left( \frac{1}{r} + \Omega(1) \right)\frac{\binom{n}{2}}{3}$ edges of the STS.
Perhaps surprisingly, the following construction gives a negative answer to this question when $r=2$.

\begin{example}\label{construction}
Partition $[n]$ into two sets $X$ and $Y$, and assign colour blue to all triples touching both $X$ and $Y$, and colour red to all the other triples. In an STS, every edge between $X$ and $Y$ must be covered by exactly one blue triple, and every blue triple covers exactly two of these edges. Hence, in any STS, the number of blue triples is 
$\frac{|X| |Y|}{2}$. 
Let us now choose the sizes of $X,Y$ so that $\frac{|X||Y|}{2}$ is roughly half of the number of edges in an STS. Take $|X| = \lfloor \frac{3+\sqrt{3}}{6}n \rfloor$, so $|X| = \frac{3+\sqrt{3}}{6}n - O(1)$ and $|Y| = n-|X| = \frac{3-\sqrt{3}}{6}n + O(1)$. 
Then 
$\frac{|X||Y|}{2} = \frac{1}{2} \cdot \frac{1}{3}\binom{n}{2} \pm O(n)$, as required.
Thus, in this colouring, any STS has discrepancy $O(n)$, i.e., is roughly balanced.
\end{example}

However, we can show that this is essentially the only $2$-edge-colouring of $K_n^{(3)}$ which does not contain a Steiner triple system with high discrepancy. 

\begin{theorem}
	\label{thm:STS_2_colours}
	For every $\eta >0$, there exists $\mu >0$ such that the following holds for all sufficiently large $n$ with $n \equiv 1,3 \pmod 6$.
	Every $2$-edge-colouring of $K_n^{(3)}$ either contains a Steiner triple system with at least $\frac{1}{2} \cdot \frac{1}{3} \binom{n}{2} + \mu n^2$ edges of the same colour, or it can be obtained from the colouring in Example~\ref{construction} by switching the colour of at most $\eta n^3$ triples.
\end{theorem}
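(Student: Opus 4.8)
The plan is to prove the contrapositive: assume the $2$-colouring is \emph{not} close to the colouring in Example~\ref{construction}, and deduce that some Steiner triple system has discrepancy at least $\mu n^2$. The guiding intuition is that Example~\ref{construction} is, up to a small edit distance, the unique obstruction because it is the unique colouring for which the number of edges of each colour in \emph{every} STS is pinned down: there, the blue triples carry exactly two ``cross'' pairs each, so their count equals $|X||Y|/2$ regardless of which STS we pick. Any colouring far from this one should have enough ``slack'' that we can build two STSs with very different colour counts, or a single STS that is far from balanced. Concretely, the approach is: (i) first show that if the colouring admits even one STS $\cS_0$ that is far from balanced we are done; (ii) otherwise, \emph{every} STS is roughly balanced, and we want to show this forces the Example~\ref{construction} structure.

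The main tool should be the structural characterization of $3$-uniform hypergraphs avoiding a certain induced configuration that the abstract advertises as the key step --- presumably a statement saying that a $3$-graph in which no ``swap'' of a small number of edges changes a certain count must have a product-like / partition structure. The idea is to let $B$ be the set of blue triples and consider local swaps: a \emph{swap} replaces a few triples of an STS by a few others covering the same pairs (the simplest being the Pasch-type swap that replaces $\{a,b,c\},\{a,d,e\},\{b,d,f\},\{c,e,f\}$-style configurations, or the ``trade'' $\{abx,cdx\} \leftrightarrow \{abc\text{-alternatives}\}$ on a common point). If $B$ contains a configuration where such a swap changes the number of blue triples, then starting from any STS and applying (or not applying) this swap yields two STSs differing in their blue count; iterating, if we can find $\Omega(n^2)$ disjoint such swaps we get an STS with discrepancy $\Omega(n^2)$. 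Hence if every STS is balanced, $B$ must avoid all these ``unbalanced'' configurations, and this is exactly the induced-subgraph-avoidance hypothesis to which the structural theorem applies. The structural theorem should then say: $B$ is, up to $\eta n^3$ edits, exactly the set of triples crossing a fixed bipartition $X \cupdot Y$ (the ``blue = touches both sides'' colouring), with the balance constraint forcing $|X||Y|/2 \approx \binom{n}{2}/6$, i.e.\ the sizes in Example~\ref{construction}.

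The steps, in order: (1) Fix $\eta>0$; set $\mu$ small, to be specified, and assume no STS has discrepancy $\ge \mu n^2$. (2) Show (using the existence and flexibility of STSs --- e.g.\ the fact that any partial STS on few pairs extends, or the random-greedy / absorption methods behind modern STS results) that one can embed $\Omega(n^2)$ pairwise edge-disjoint ``local trades'' into a single STS, so that if the colouring had $\Omega(n^2)$ trades each of which is colour-changing, we would obtain an STS with discrepancy $\Omega(n^2)$; contradiction. Hence at most $o(n^2)$ of the trades in a typical STS are colour-changing, which by a counting/regularity argument forces $B$ to (essentially) avoid the relevant induced configurations globally. (3) Apply the structural characterization from the earlier part of the paper to conclude that $B$ agrees, up to $\eta n^3/2$ triples, with the ``crossing a bipartition'' family for some partition $[n]=X \cupdot Y$. (4) Finally, argue that if $|X||Y|/2$ differs from $\tfrac13\binom n2$ by more than $cn^2$, then a random (or carefully constructed) STS has blue count concentrated around $|X||Y|/2$ --- again using that in an STS the number of triples meeting both $X$ and $Y$ is \emph{exactly} $|X||Y|/2$ up to the $o(n^3)$ edited triples --- hence discrepancy $\Omega(n^2)$, contradiction; so $|X|,|Y|$ are as in Example~\ref{construction} up to $O(\sqrt{\eta}\,n)$, and absorbing these vertices' triples into the edit count gives the desired ``switch at most $\eta n^3$ triples'' conclusion.

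The hard part will be Step (2): turning ``no STS has high discrepancy'' into a \emph{global} structural statement about the colouring. One must show that colour-changing local trades are both (a) plentiful in any single STS (so their absence, not just rareness, is forced) and (b) can be packed edge-disjointly in large numbers into one STS while controlling the rest of the system --- this is where one needs the machinery of Steiner triple system existence/flexibility (iterative absorption, or the Keevash/Glock--Kühn--Lo--Osthus framework) rather than elementary arguments, since we are not free to choose the STS arbitrarily but must cover \emph{every} pair exactly once. A secondary obstacle is making the induced-configuration hypothesis of the structural theorem precisely match what ``every STS balanced'' yields: one likely needs a robust/counting version (``few bad configurations'' rather than ``no bad configuration'') and a corresponding stability version of the structural theorem, and then track the error terms carefully through to the final $\eta n^3$ bound.
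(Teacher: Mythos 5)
Your high-level plan matches the paper's proof closely: both rest on (i) a notion of ``gadget'' (the paper uses $K^{(3)}_{2,2,2}$ viewed as the union of two Pasch configurations with different colour profiles), (ii) a packing argument showing that many gadgets in the colouring can be placed edge-disjointly into a single STS (the paper removes the shadows of $\Omega(n^2)$ randomly chosen gadgets and triangle-decomposes the rest via the minimum-degree theorem of Barber--K\"uhn--Lo--Osthus), (iii) a structure theorem for gadget-free colourings obtained by reducing to a $\{0,\pm1\}$-colouring of the link graph $K_{n-2}$ with no ``unbalanced $C_4$'', bootstrapped to the few-gadgets regime via the induced hypergraph removal lemma, and (iv) a size check on $(X,Y)$ via the red-edge density and a random embedding. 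The one place where your framing is looser than it needs to be is the step you flag as hardest: you try to go from ``no STS has high discrepancy'' to ``few colour-changing trades in a typical STS'' to ``the colouring avoids the configurations globally'', via an extra counting/regularity step. The paper avoids that detour entirely by stating the dichotomy about the \emph{colouring} directly --- either it has $\ge\delta n^6$ gadgets (then Theorem~\ref{thm:gadgets_to_discrepancy} produces a high-discrepancy STS) or it has $<\delta n^6$ gadgets (then Theorem~\ref{thm:gadgets} gives the structured partition). Your contrapositive of the first arm is exactly the conclusion you want, so there is no need for the ``plentiful in any single STS'' claim in your Step (2)(a), which as stated does not make sense (gadgets live in the colouring, not in an STS) and is not what the proof uses. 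Your final size argument is also fine but can be simplified as in the paper: rather than reasoning about the blue count of an STS being near $|X||Y|/2$, one just observes that the red-edge density in $K_n^{(3)}$ is $x^3+(1-x)^3\pm O(\alpha)$ and invokes a one-line random-embedding averaging bound (Fact~\ref{fact:random_STS}) when this is bounded away from $1/2$; the condition $x^3+(1-x)^3=1/2$ is equivalent to $x(1-x)=1/6$, recovering the sizes of Example~\ref{construction}.
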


In contrast with the $2$-colour case, when the number of colours is three or more, we can show that a Steiner triple system with high discrepancy always exists.

\begin{theorem}
\label{thm:STS_more_colours}
For all $r\in \bN$ with $r\ge 3$, there exists $\mu>0$ such that the following holds for all sufficiently large $n$ with $n \equiv 1,3 \pmod 6$. Every $r$-edge-colouring of $K_n^{(3)}$ contains a Steiner triple system with at least $\frac{1}{r} \cdot \frac{1}{3} \binom{n}{2} + \mu n^2$ edges of the same colour.
\end{theorem}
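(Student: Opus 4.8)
Write $N=\binom{n}{2}/3$ for the number of triples of a Steiner triple system of order $n$. The first step is a reduction: it suffices to find \emph{some} Steiner triple system $\cS$ of order $n$ and \emph{some} colour $c$ for which $|c(\cS)|$ differs from $N/r$ by $\Omega(n^2)$. Indeed, if $|c(\cS)|\ge N/r+\Omega(n^2)$ then $D_f(\cS)\ge r(|c(\cS)|-N/r)=\Omega(n^2)$; and if $|c(\cS)|\le N/r-\Omega(n^2)$ then $\sum_{c'\ne c}|c'(\cS)|=N-|c(\cS)|\ge (r-1)N/r+\Omega(n^2)$, so some colour $c'\ne c$ has $|c'(\cS)|\ge N/r+\Omega(n^2)/(r-1)$ in the \emph{same} system $\cS$, again giving discrepancy $\Omega(n^2)$. (This reduction is valid for $r=2$ as well; what is special about $r\ge 3$ surfaces only at the very end.)

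Call a $3$-graph $G$ on $[n]$ \emph{rigid} if $|G\cap\cS|$ is the same, up to an additive $o(n^2)$, for every Steiner triple system $\cS$ of order $n$, and write $\nu(G)$ for this common value. If some colour class $G_c$ is not rigid, then there are systems $\cS_1,\cS_2$ with $|G_c\cap\cS_1|-|G_c\cap\cS_2|=\Omega(n^2)$, so one of them has $|G_c\cap\cS_i|$ far from $N/r$ and we are done by the reduction; likewise we are done if some $G_c$ is rigid but $\nu(G_c)$ is not within $o(n^2)$ of $N/r$ (this also disposes of the degenerate classes of size $o(n^2)$ or $\binom{n}{3}-o(n^2)$, which are trivially over- or under-represented). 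We may therefore assume that \emph{every} colour class is rigid with $\nu(G_c)=N/r+o(n^2)$. Here we invoke the structural characterisation that is the technical core of the paper: a rigid $3$-graph is, up to altering $o(n^3)$ triples, one of a short list of canonical graphs, and the ones with $\nu=\Theta(n^2)$ are precisely (approximately) the graph $B_S$ of all triples meeting both $S$ and $[n]\sm S$, for which $\nu(B_S)=|S|(n-|S|)/2$, and its complement $\overline{B_S}$ (the triples inside $S$ or inside $[n]\sm S$), for which $\nu(\overline{B_S})=N-|S|(n-|S|)/2$; the empty graph and $K_n^{(3)}$ are canonical too but have $\nu\in\{0,N\}$ and are excluded. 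Consequently each $G_c$ is $o(n^3)$-close either to some $B_{S_c}$ with $|S_c|(n-|S_c|)=n^2/(3r)+o(n^2)$, or to some $\overline{B_{S_c}}$ with $|S_c|(n-|S_c|)=(r-1)n^2/(3r)+o(n^2)$; write $\alpha_c=|S_c|/n$, so $\alpha_c(1-\alpha_c)=\tfrac1{3r}+o(1)$ or $\tfrac{r-1}{3r}+o(1)$ respectively.

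It remains to show that no such family of $r\ge 3$ $3$-graphs can partition $K_n^{(3)}$. Using $\alpha^3+(1-\alpha)^3=1-3\alpha(1-\alpha)$ one checks that both a $B$-type and a $\overline B$-type class have $\binom{n}{3}/r+o(n^3)$ triples, so the sizes are consistent and the contradiction must come from near-disjointness. If $G_c,G_{c'}$ are both $B$-type with $S_c$ not approximately equal to $S_{c'}$ or its complement, then $B_{S_c}\cap B_{S_{c'}}$ contains $\Omega(n^3)$ triples (e.g.\ triples with one point in each of three $\Omega(n)$-sized atoms of the partition generated by $S_c,S_{c'}$), contradicting $G_c\cap G_{c'}=\emptyset$; since the atom sizes are controlled by $\alpha_c,\alpha_{c'}\ne 0,1$, this leaves at most one $B$-type class. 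Running the analogous count through the four atoms shows $\overline{B_{S_c}}\cap\overline{B_{S_{c'}}}$ always contains $\Omega(n^3)$ triples inside the largest atom, so there is at most one $\overline B$-type class. Finally, $B_{S_c}\cap\overline{B_{S_{c'}}}=\emptyset$ would force $S_{c'}$ and $\overline{S_{c'}}$ to each lie (approximately) inside $S_c$ or $\overline{S_c}$, and checking the four possibilities forces $S_c\approx S_{c'}$ or $S_c\approx\overline{S_{c'}}$, hence $\alpha_c(1-\alpha_c)=\alpha_{c'}(1-\alpha_{c'})$; but $\tfrac1{3r}\ne\tfrac{r-1}{3r}$ for $r\ge 3$, so a $B$-type and a $\overline B$-type class cannot coexist. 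Thus at most one colour class can be canonical with the required value, contradicting $r\ge 3$. (For $r=2$ the parameters satisfy $\tfrac1{3r}=\tfrac{r-1}{3r}=\tfrac16$, so the argument does not contradict but instead pins the two classes to $B_S$ and $\overline{B_S}$ with $|S|=\frac{3\pm\sqrt3}{6}n+o(n)$, recovering Example~\ref{construction}; this is the heart of Theorem~\ref{thm:STS_2_colours}.)

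The main obstacle is the structural characterisation of rigid $3$-graphs. Its ``stability'' direction — classifying, up to $o(n^3)$ edits, the $3$-graphs whose intersection with every Steiner triple system is essentially constant — appears to require a regularity/stability analysis of the induced configurations that witness non-rigidity. Its ``construction'' direction is equally delicate: one has to convert the \emph{absence} of such a configuration into two genuine Steiner triple systems whose $G$-intersections differ by $\Omega(n^2)$, and building an actual design exhibiting prescribed local behaviour relies on the recently developed machinery for the existence of designs in quasirandom $3$-graphs and for completing partial designs. The remaining work — propagating the $o(n^3)$ error terms through the disjointness estimates and handling boundary cases in the arithmetic — is routine by comparison.
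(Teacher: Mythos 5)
Your proposal reorganises the argument around the notion of a \emph{rigid} colour class rather than around gadgets, and, modulo the lemma you black-box, the logic is sound and genuinely different in its packaging. The paper's route is: (a) for each colour $c$, merge the other $r-1$ colours into one; (b) either some merged $2$-colouring has $\Omega(n^6)$ gadgets, in which case the Pasch-swap construction (together with the Barber--K\"uhn--Lo--Osthus minimum-degree triangle decomposition) directly yields a high-discrepancy STS; or (c) every merged $2$-colouring is $o(n^3)$-close to the bipartite-like construction, and a head-on comparison of the $r$ associated partitions shows at most two colours can be large, so for $r\ge 3$ one colour is tiny and a random embedding of any fixed STS finishes. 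Your reformulation folds both termination conditions (many gadgets; tiny colour) into the single dichotomy ``some $G_c$ is not rigid, or some $\nu(G_c)$ is far from $N/r$'', and then your atom-counting argument plays the role of the paper's Theorem~\ref{thm:gadgets_3_colours}. The atom counting itself (at most one $B$-type, at most one $\overline B$-type, and the two cannot coexist when $r\ge 3$ because $\tfrac{1}{3r}\neq\tfrac{r-1}{3r}$) is correct and close in spirit to the paper's Claim~\ref{claim:partitions_are_related}. What the rigidity abstraction buys is conceptual cleanliness: both Fact~\ref{fact:random_STS} and Theorem~\ref{thm:gadgets_to_discrepancy} disappear into the definition of ``not rigid''.

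Two things you should be explicit about, since your black box is not literally a theorem of the paper. First, the paper never proves ``rigid $\Rightarrow$ $o(n^3)$-close to some $B_S$ or $\overline{B_S}$''; it proves ``$o(n^6)$ gadgets $\Rightarrow$ $o(n^3)$-close to that structure'' (Theorem~\ref{thm:gadgets}). To use it in your form you still need the implication ``$G_c$ rigid $\Rightarrow$ the $2$-colouring $(G_c,\overline{G_c})$ has $o(n^6)$ gadgets'', and that is precisely the content of the Pasch-swapping argument in Theorem~\ref{thm:gadgets_to_discrepancy}: from $\delta n^6$ gadgets one extracts $\Omega(n^2)$ gadgets with edge-disjoint shadows, decomposes the complement of their shadows into triangles via Theorem~\ref{thm:minimum_degree_triangle_decomposition}, and then the two ways of completing by Pasch configurations give two STSs whose $G_c$-counts differ by $\Omega(n^2)$. (Incidentally, this is a $2$-uniform minimum-degree triangle decomposition result, not the quasirandom-design machinery you cite in your last paragraph.) Second, your step ``$\nu(G_c)=N/r+o(n^2)$ and $G_c$ is $o(n^3)$-close to $B_{S_c}$, therefore $|S_c|(n-|S_c|)/2=N/r+o(n^2)$'' is not automatic: $o(n^3)$ symmetric-difference can in principle meet a single STS in $\Theta(n^2)$ triples. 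It does follow, but only after a random-embedding argument (the expected intersection of the symmetric difference with a random copy of an STS is $o(n^2)$), so you need to produce one witnessing STS before you can read off the parameter $\alpha_c$. Both points are fillable, but without them the proposal as written has genuine gaps in exactly the places where the paper's machinery is doing the work.
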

\medskip

\textbf{Organisation.}
Section~\ref{sec:overview_and_tools} provides an overview of the various ideas we use to prove our main theorems and collects a list of known tools.
In Section~\ref{sec:key_lemma} we state and prove our key structure theorem for $2$-edge-colourings of $K_n^{(3)}$.
In Sections~\ref{sec:gadgets_to_discrepancy} and~\ref{sec:more_colours}, we show its relevance for finding an STS with high discrepancy.
Section~\ref{sec:main_proofs} contains the proofs of Theorems~\ref{thm:STS_2_colours} and~\ref{thm:STS_more_colours}, and Section~\ref{sec:remarks} consists of concluding remarks and further problems. 
\bigskip

\textbf{Notation.} We use standard graph theory notation.
We denote the complete $3$-uniform hypergraph with $n$ vertices by $K_n^{(3)}$.

For $r \in \mathbb{N}$, we denote $[r]:=\{1,2,\dots,r\}$.
We use the notation $a = b \pm c$ to mean that $b-c \leq a \leq b+c$.
Moreover, for $a, b, c \in (0,1]$, we write $a \ll b \ll c$ in our statements to mean that there are increasing functions $f, g : (0, 1] \to (0, 1]$ such that whenever $a \le f(b)$ and $b \le g(c)$, then the subsequent result holds.

\section{Proof overview and tools}
\label{sec:overview_and_tools}

\subsection{Overview}

In order to obtain an STS with high discrepancy, we will set aside a certain number of small (edge-coloured) graphs, which we call \emph{gadgets}. These will be used to boost the discrepancy.
Let us formalise this notion by defining the gadgets we are going to use.

A {\em Pasch configuration} is the $6$-vertex $3$-graph with edges $135,146,236,245$ (see the left drawing in Figure~\ref{fig:pasch}).
Let $K^{(3)}_{2,2,2}$ denote the complete 3-partite 3-graph with parts of size 2. 
Observe that $K^{(3)}_{2,2,2}$ edge-decomposes into two Pasch configurations: Indeed, suppose that the vertex set of $K^{(3)}_{2,2,2}$ is $[6]$ and the three parts are $\{1,2\}, \{3,4\}$ and $\{5,6\}$. Then $\{135,146,236,245\}$ and $\{136,145,235,246\}$ are two edge-disjoint Pasch configurations covering all the $8$ edges of $K^{(3)}_{2,2,2}$ (see Figure~\ref{fig:pasch}).
Observe that this decomposition is unique.
The crucial property is that both Pasch configurations have the same shadow\footnote{The {\em shadow} of a $3$-graph $P$ is the set of all pairs of vertices which are contained in an edge of $P$.}, namely the $12$ edges of $K_{2,2,2}$.
This means that if an STS contains one of the Pasch configurations, then we can exchange it for the other Pasch configuration, while still having an STS.
An edge-coloured copy $K$ of $K^{(3)}_{2,2,2}$ is called a {\em gadget} if there is a colour $c$ such that the two Pasch configurations decomposing $K$ have a different number of edges in colour $c$. 
We will use gadgets to manipulate the colour profile of an STS.

\begin{figure}[htp!]
\centering
\begin{subfigure}[m]{0.3\textwidth}
    \centering
    \includegraphics{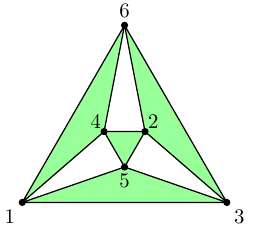}
\end{subfigure}%
\hspace{1cm}
\begin{subfigure}[m]{0.3\textwidth}
    \centering
    \includegraphics{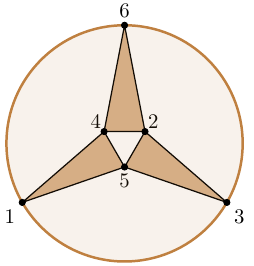}
\end{subfigure}%

\caption{The left figure shows a Pasch configuration. The two figures together show that the edge set of the complete $3$-partite $3$-graph with parts $\{1,2\}$, $\{3,4\}$ and $\{5,6\}$ decomposes into two Pasch configurations: $\{135,146,236,245\}$ (in green) and $\{136,145,235,246\}$ (in brown).}\label{fig:pasch}
\end{figure}

Consider an $r$-edge-colouring of $K_n^{(3)}$ (with $n \equiv 1,3 \pmod 6$). If we can find a collection $\cK$ of $\Omega(n^6)$ many gadgets, then we proceed as follows to show that the colouring contains an STS with high discrepancy (c.f.~Theorem~\ref{thm:gadgets_to_discrepancy}). 
We first pass to a subcollection $\cI \subseteq \cK$ of $\Omega(n^2)$ many gadgets whose shadows are pairwise edge-disjoint and every vertex is contained in at most a small linear number of gadgets from $\cI$ (this will be achieved by sampling every gadget independently with a suitable probability).
Now, consider the $2$-graph obtained from $K_n$ by removing the edges which are in the shadow of a gadget in $\cI$.
This graph still has high minimum degree and thus its edge set can be decomposed into triangles (c.f.~Theorem~\ref{thm:minimum_degree_triangle_decomposition}).
Considering these triangles as edges of $K_n^{(3)}$, we obtain a linear hypergraph\footnote{A hypergraph is \defn{linear} if every two edges intersect in at most one vertex.} covering all edges of $K_n$, except precisely those which are in the shadow a gadget of $\cI$. 
Regardless of the colour of the edges of this linear hypergraph, we can complete it to an STS with high discrepancy by choosing, for each gadget in $\cI$, the appropriate Pasch configuration. 

However, there are edge-colourings of $K_n^{(3)}$ which do not even contain a single gadget: for example, this is the case for the $2$-edge-colouring described in Example~\ref{construction}, where we do not put any condition on the sizes of $X$ and $Y$ (and in particular for a monochromatic colouring, which corresponds to one of the sets $X,Y$ being empty).
As our main structural result (c.f.~Theorem~\ref{thm:gadgets}), we will show that any 2-edge-colouring having few gadgets is close to the colouring given by Example~\ref{construction} (up to switching the two colours).
 
Thus, in the $2$-colour case, we either find enough gadgets (and thus get an STS with high discrepancy, via the argument described above), or we get a partition $X,Y$ as in Example~\ref{construction}.
Now, if the sizes of $X$ and $Y$ differ significantly from those specified in Example~\ref{construction} (i.e., if $|X|/n$ is far from both 
$\frac{3+\sqrt{3}}{6}$ and $1 - \frac{3+\sqrt{3}}{6} = \frac{3-\sqrt{3}}{6}$), then significantly more than half of the edges of $K_n^{(3)}$ have the same colour, in which case a random embedding of any given STS into $K_n^{(3)}$ is expected to have high discrepancy (c.f.~Fact~\ref{fact:random_STS}).
Therefore the only case where we cannot guarantee an STS with high discrepancy is when the colouring is close to the one in Example~\ref{construction} (with the same sizes for $X,Y$). This proves Theorem~\ref{thm:STS_2_colours}.

Finally, let us consider the case of $r \geq 3$ colours. 
Before discussing our strategy, we make the following observation.
Given an $r$-edge-colouring of $K_n^{(3)}$, consider the $2$-edge-colouring of $K_n^{(3)}$ obtained by identifying $r-1$ of the $r$ colours into one colour.
Observe that if a copy of $K^{(3)}_{2,2,2}$ is a gadget for this 2-colouring, then it is also a gadget for the original colouring.
Therefore we can simply refer to a gadget without specifying the colouring.
To take advantage of our structure theorem for 2-colourings, we consider the $2$-edge-colourings of $K_n^{(3)}$ obtained by identifying $(r-1)$-subsets of colours.
If one of these $2$-colourings contains many gadgets, we are done (as, by the observation above, these gadgets are gadgets for the original colouring as well).
Otherwise, by the above discussion, we know the structures of each of them quite precisely.
By comparing them, we will prove that all but at most two colours appear on only very few edges.
Overall, when $r \ge 3$, we always find $\Omega(n^6)$ gadgets unless one colour class is small (c.f.~Theorem~\ref{thm:gadgets_3_colours}). In both cases, this implies that we can find an STS with high discrepancy: in the first case by the argument described at the beginning of this section, and in the second case by embedding any given STS into $K_n^{(3)}$ at random. 

\subsection{Tools}
A \emph{$K_3$-decomposition} of a (2-uniform) graph $G$ is a partition of $E(G)$ into triangles. Thus, an STS of order $n$ is equivalent to a $K_3$-decomposition of $K_n$.
Note that if $G$ has a $K_3$-decomposition then $|E(G)|$ is divisible by $3$ and all vertices of $G$ have even degree. A graph having these properties is called \emph{$K_3$-divisible}.
Kirkman's theorem can then be rephrased as stating that $K_n$ admits a $K_3$-decomposition if and only if $K_n$ is $K_3$-divisible.

As described in the proof overview, our strategy relies on the use of gadgets. 
To make this strategy work, it is crucial to be able to find a $K_3$-decomposition of the graph obtained from $K_n$ by removing the edges in the shadows of the gadgets.
A famous conjecture of Nash-Williams~\cite{nash-williams:70} asks for a minimum degree generalisation of Kirkman's theorem and postulates that every $K_3$-divisible $n$-vertex graph with minimum degree at least $3n/4$ has a $K_3$-decomposition. This is still wide open, but the statement is known to be true if $3/4$ is replaced by a larger constant, which suffices for our purposes.
The following minimum degree version of Kirkman's theorem was proved in~\cite{BKLO:16} by reducing the problem to a fractional variant (see~\cite{BGKLMO:20} for a short proof).

\begin{theorem}[\cite{BKLO:16}]
\label{thm:minimum_degree_triangle_decomposition}
	For sufficiently large $n$, every $K_3$-divisible $n$-vertex graph $G$ with $\delta(G)\ge 0.91n$ has a triangle decomposition.
\end{theorem}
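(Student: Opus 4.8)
The plan is to prove this via the now-standard two-step strategy underlying recent progress on the Nash-Williams conjecture: first reduce the existence of an (integral) $K_3$-decomposition to the existence of a \emph{fractional} one, and then establish the fractional statement directly.

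\textbf{Step 1: integral decomposition from a fractional one, via iterative absorption.} I would first reserve a carefully constructed absorbing structure $A \subseteq G$ --- a subgraph of bounded maximum degree such that for every $K_3$-divisible graph $L$ on a fixed small vertex set $U_\ell$ that is edge-disjoint from $A$, the union $A \cup L$ has a $K_3$-decomposition. One builds $A$ from many (near-)disjoint copies of small ``transformer'' gadgets, and that enough such gadgets exist follows from $\delta(G) \ge 0.91 n$ by a routine but lengthy counting argument. Next, fix a \emph{vortex}: a nested sequence $V(G) = U_0 \supseteq U_1 \supseteq \cdots \supseteq U_\ell$ with $|U_\ell|$ bounded and $|U_{i+1}| \approx \nu |U_i|$ for a small constant $\nu$, chosen so that each $G[U_i]$ still has large minimum degree relative to $|U_i|$. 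One then iterates over $i = 0, 1, \dots, \ell-1$: letting $H_i$ be the graph of not-yet-covered edges at stage $i$, first delete a sparse pseudorandom spanning subgraph of $H_i$ and apply the Rödl nibble (Pippenger--Spencer) to obtain an \emph{approximate} $K_3$-decomposition covering all but a sparse remainder --- here the slack $0.91 > 3/4$ is comfortably enough, since every edge lies in roughly $n$ triangles with small pairwise codegrees --- and then invoke a \emph{cover-down lemma} to extend the current partial decomposition by triangles so that every still-uncovered edge lies inside $U_{i+1}$. After $\ell$ rounds the leftover $L$ is a graph on $U_\ell$; since $K_3$-divisibility is preserved at every step, $L$ is $K_3$-divisible, and $A$ absorbs it. Concatenating all the triangles produced yields a $K_3$-decomposition of $G$.

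\textbf{Step 2: the fractional $K_3$-decomposition.} It remains to show that every $n$-vertex graph $G$ with $\delta(G) \ge 0.9 n$ admits a fractional $K_3$-decomposition, i.e.\ a nonnegative weighting $w$ of its triangles with $\sum_{T \ni e} w(T) = 1$ for every edge $e$. I would argue by local correction: start from the naive assignment in which every triangle receives the same small weight (tuned so that each edge is covered to total weight $1$ on average), observe that then each edge constraint is satisfied up to a small additive error, and repair these errors one at a time by pushing weight along short closed walks through common neighbourhoods of the edge's endpoints. The point is that $\delta(G) \ge 0.9 n$ guarantees that every pair of adjacent vertices has many common neighbours, so the repairs can be carried out with total perturbation small enough to keep all weights nonnegative. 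This is exactly the step that forces the constant to stay bounded away from $3/4$; pushing $0.9$ down to $3/4$ is precisely the content of the Nash-Williams conjecture and remains open.

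\textbf{Main obstacle.} The delicate part is the cover-down lemma in Step 1: one must simultaneously cover \emph{all} edges of $H_i$ meeting $U_i \setminus U_{i+1}$ by triangles whose apex vertices lie in $U_{i+1}$, without the chosen triangles clashing, and arrange that the graph they leave behind inside $U_{i+1}$ is again $K_3$-divisible and has the properties needed to continue the iteration. Making the minimum-degree bookkeeping close, so that the bound $0.91 n$ --- rather than something larger --- suffices, is the real crux; by contrast the fractional argument of Step 2, while it needs a clever weighting scheme, is essentially self-contained.
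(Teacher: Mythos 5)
The paper does not prove this theorem itself: it is quoted directly from the literature, citing Barber, K\"uhn, Lo and Osthus \cite{BKLO:16}, who reduce the integral $K_3$-decomposition problem to its fractional relaxation via iterative absorption (with a short alternative proof in \cite{BGKLMO:20}), combined with Dross's fractional $K_3$-decomposition theorem at minimum degree $0.9n$. Your two-step sketch --- iterative absorption over a vortex with an absorber and a cover-down lemma, followed by a direct argument for the fractional relaxation --- is a fair high-level outline of exactly that route, and your identification of the cover-down step as the real crux matches where the technical weight lies in those works; since the present paper simply invokes the result, there is nothing here to compare your argument against beyond noting this.
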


Given an $r$-edge-coloured $K_n^{(3)}$, finding an STS with high discrepancy is straightforward when there is a colour used only few times.
This is formalised in the following fact, for which we provide a proof for completeness.

\begin{fact}
	\label{fact:random_STS}
	For every $\beta >0$ and $r \in \mathbb{N}$ with $r \ge 2$, there exists $\mu >0$ such that the following holds for every $n$ with $n \equiv 1, 3 \pmod 6$.
	Let $K_n^{(3)}$ be $r$-edge-coloured and suppose that there exists a colour $c$ such that the number of edges of colour $c$ is at most $\left(\frac{1}{r} -\beta\right) \cdot \binom{n}{3}$.
	Then there exists a Steiner triple system with at least $\frac{1}{r} \cdot \frac{1}{3} \binom{n}{2} + \mu n^2$ edges of the same colour.
\end{fact}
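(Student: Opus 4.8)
The plan is a short first-moment argument: we embed a fixed Steiner triple system into the coloured $K_n^{(3)}$ by a uniformly random relabelling of the vertices, and show that a typical embedding already has large discrepancy.

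Since $n\equiv 1,3\pmod 6$, Kirkman's theorem provides a Steiner triple system $\cS$ of order $n$; set $N:=|\cS|=\binom n2/3$. Let $\pi$ be a uniformly random bijection $[n]\to[n]$ and consider the Steiner triple system $\pi(\cS):=\{\pi(e):e\in\cS\}$. For a fixed triple $e\in\cS$, its image $\pi(e)$ is a uniformly random element of $\binom{[n]}{3}$, because every $3$-subset of $[n]$ is the image of exactly $3!\,(n-3)!$ bijections. Writing $m_c$ for the number of edges of colour $c$, we thus get $\prob{\pi(e)\text{ has colour }c}=m_c/\binom n3\le\frac1r-\beta$, so by linearity of expectation the number $Z$ of triples of $\pi(\cS)$ receiving colour $c$ satisfies $\expn{Z}\le\big(\frac1r-\beta\big)N$. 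Hence some outcome of $\pi$ — that is, a Steiner triple system $\cS'$ of order $n$ — has at most $\big(\frac1r-\beta\big)N$ triples of colour $c$.

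It remains to turn this deficit in one colour into a surplus in another. In $\cS'$, at least $\big(1-\frac1r+\beta\big)N$ triples carry one of the $r-1$ colours different from $c$, so by pigeonhole some colour $c'\ne c$ appears on at least $\frac{1}{r-1}\big(1-\frac1r+\beta\big)N=\big(\frac1r+\frac{\beta}{r-1}\big)N$ triples of $\cS'$. Therefore $|c'(\cS')|-\frac{|\cS'|}{r}\ge\frac{\beta}{r-1}N$, and the discrepancy of $\cS'$ is at least $r\cdot\frac{\beta}{r-1}N=\frac{r\beta}{3(r-1)}\binom n2\ge\mu n^2$ once $\mu=\mu(\beta,r)>0$ is chosen small enough (using $\binom n2\ge n^2/4$ for $n\ge2$).

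Since this is just averaging combined with pigeonhole, there is no genuine obstacle; the only point worth stating carefully is that the discrepancy functional rewards an \emph{over}-represented colour, so the hypothesis that colour $c$ is \emph{under}-used must first be passed, via pigeonhole over the remaining $r-1$ colours, to a colour that is over-used.
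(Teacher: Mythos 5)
Your proof is correct and is essentially the same argument as the paper's: fix an STS, randomly embed it via a uniform permutation, use linearity of expectation to find an embedding with few triples of colour $c$, then pigeonhole over the remaining $r-1$ colours to find an over-represented colour $c'$. The only cosmetic difference is that you spell out why $\pi(e)$ is uniform on $\binom{[n]}{3}$, which the paper leaves implicit.
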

\begin{proof}
	Let $\mu \ll \beta, 1/r$, fix an arbitrary STS of order $n$ and consider a random embedding into $K_n^{(3)}$ (i.e.~a random bijection from its points to $[n]$).
	Then the expected number of triples of colour $c$ is at most $\frac{1}{3} \binom{n}{2} \cdot \frac{\left(\frac{1}{r} -\beta\right) \cdot \binom{n}{3}}{\binom{n}{3}} =  \left(\frac{1}{r} -\beta\right) \cdot \frac{1}{3} \binom{n}{2}$.
	Therefore there exists an embedding $\cS$ of the STS with at most $\left(\frac{1}{r} -\beta\right) \cdot \frac{1}{3} \binom{n}{2}$ triples of colour $c$.
	By averaging over the other colours, there exists $c' \in [r] \setminus \{c\}$ such that $\cS$ has at least 
	\[
	\frac{\frac{1}{3}\binom{n}{2} - \left(\frac{1}{r} -\beta\right) \cdot \frac{1}{3} \binom{n}{2}}{r-1} \ge \frac{1}{r} \cdot \frac{1}{3} \binom{n}{2} + \mu n^2
	\]
	triples of colour $c'$, as desired.
\end{proof}

We will use the hypergraph removal lemma, first conjectured by Erd\H{o}s, Frankl and R{\"o}dl~\cite{EFR:86}, then proved independently by Gowers~\cite{G:07} and Nagle, R\"{o}dl, Schacht and Skokan~\cite{NRS:06,RS:04,RS:06} and later by Tao~\cite{T:06}.
More precisely, we will use the induced hypergraph removal lemma, proved by R\"{o}dl and Schacht~\cite{RS:09}.

\begin{lemma}[Induced hypergraph removal lemma~\cite{RS:09}]
\label{lemma:removal}
	For every finite family of $k$-graphs $\cF$ and every
	$\alpha > 0$, there exist $\delta > 0$ and $n_0 \in \mathbb{N}$ such that the following holds.
	Let $\cH$ be a $k$-graph on $n \ge n_0$ vertices.
	If for every
	$F \in \cF$, $\cH$ contains at most $\delta n^{v(F)}$ induced copies of $F$, then one can change (add or delete) at most $\alpha n^k$ edges of $\cH$ so that the resulting hypergraph contains no induced copy of any member of $\cF$.
\end{lemma}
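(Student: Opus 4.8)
The plan is to deduce the induced removal lemma from the hypergraph regularity method, mirroring the proof of the ordinary (non-induced) removal lemma but carrying the ``edge/non-edge'' pattern through every step. The first step is a reformulation in the language of edge-colourings. Encode $\cH$ as a $2$-colouring $\Phi$ of the complete $k$-graph on $[n]$: colour $1$ on the edges of $\cH$, colour $0$ on the non-edges; since $\cF$ is finite, it costs nothing to work with a fixed palette of $c$ colours so that this also covers families. Each $F\in\cF$, read together with its complement inside $K_{v(F)}^{(k)}$, defines a colour pattern $\chi_F$ on $K_{v(F)}^{(k)}$, and an induced copy of $F$ in $\cH$ is precisely an occurrence of $\chi_F$ in $\Phi$, while changing an edge of $\cH$ is exactly recolouring a $k$-set. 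So it suffices to prove the following \emph{colourful removal lemma}: if a $c$-colouring $\Phi$ of $K_n^{(k)}$ contains at most $\delta n^{v(F)}$ copies of $\chi_F$ for every $F\in\cF$, then one can recolour at most $\alpha n^k$ of the $k$-sets to destroy all copies of all $\chi_F$.

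For this I would apply the strong $k$-uniform hypergraph regularity lemma, in the form (as in Alon--Fischer--Krivelevich--Szegedy for graphs) that yields not only a regular partition but enough extra control to permit a two-sided cleaning. Applied to all $c$ colour classes at once --- a bounded family of hypergraphs on a common vertex set can be simultaneously regularised by passing to a common refinement --- this produces a vertex partition into a bounded number of parts, together with a compatible system of partitions of the $j$-sets for $2\le j\le k-1$, organised into a bounded number $M=M(\eps,c,\dots)$ of \emph{polyads}, such that all but an $\eps$-fraction of the polyads are $\eps$-regular for every colour class. Then comes the \emph{cleaning}: on the few polyads that are irregular, too sparse, or incident to exceptional vertices, recolour all $k$-sets to a single fixed colour; and on each good polyad, round off to $0$ or $1$ every colour density that is already within (say) $\eps$ of $0$ or $1$, leaving the remaining densities --- which are then bounded away from both $0$ and $1$ --- untouched. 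With the regularity parameters chosen small enough in terms of $\alpha$ and $M$, this changes at most $\alpha n^k$ edges in total; call the result $\Phi'$. The two-sided control from the strong lemma is what guarantees that the rounding step neither changes too much nor creates a new copy of some $\chi_{F'}$ across the altered polyads.

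It remains to verify that $\Phi'$ is free of every $\chi_F$. If some $\chi_F$ occurred in $\Phi'$, its $v(F)$ vertices would lie inside good, regular polyads whose colour densities realise the pattern $\chi_F$ --- where $\chi_F$ demands colour $i$, the $i$-density is either exactly $1$ (rounded) or bounded away from $0$, and symmetrically for the complement. Feeding this configuration into the $k$-uniform hypergraph counting (embedding) lemma, applied to the colour classes of the \emph{original} colouring $\Phi$, produces $\Omega(n^{v(F)})$ copies of $\chi_F$ in $\Phi$ itself, contradicting the hypothesis once $\delta$ is sufficiently small. Hence $\Phi'$ contains no $\chi_F$, and unwinding the encoding gives the lemma. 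The parameters are chosen in the hierarchy: $\eps$ and the sparsity threshold small in terms of $\alpha$; then $M$ is determined by the regularity lemma; then $\delta$ small in terms of $1/M$ (and $\eps$); then $n_0$ large.

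The genuine difficulty is entirely in the two imported tools --- the $k$-uniform hypergraph regularity lemma and, above all, the matching counting lemma --- whose statements and proofs are intricate because the regularisation is hierarchical: $k$-sets must be regular relative to $(k-1)$-sets, which must be regular relative to $(k-2)$-sets, and so on down to pairs, and the counting lemma must navigate this whole hierarchy. By comparison the induced/colourful layer is light bookkeeping: run the machinery for a bounded number of colour classes at once, and make the cleaning step conservative (recolour rather than delete, and dump the bad polyads into a single fixed colour) so that no new pattern is ever manufactured. This is exactly the route taken by Rödl and Schacht.
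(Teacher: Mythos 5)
The paper does not prove Lemma~\ref{lemma:removal}; it is imported verbatim as a known tool, with a citation to R\"odl and Schacht. Your sketch correctly identifies and outlines the route taken in that reference (regularise all colour classes simultaneously, clean by rounding densities and dumping bad polyads, then invoke the hypergraph counting lemma to derive a contradiction), so as an account of ``the proof of this statement'' it matches what the paper points to. One caveat: the claim that recolouring every irregular/sparse/exceptional polyad to a single fixed colour ``never manufactures a new pattern'' is not automatic --- a monochromatic block can itself contain a forbidden pattern $\chi_F$ (e.g.\ the all-edges or all-non-edges pattern) --- and this is exactly where R\"odl--Schacht's regular-approximation machinery does real work that a one-line cleaning step elides; as a sketch this is acceptable, but it is the one place where your outline hand-waves over a genuine subtlety rather than over standard heavy machinery.
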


\noindent
Finally, we need the following form of the Chernoff bound.

\begin{lemma}[Chernoff bound - Corollaries 2.3 and 2.4 in~\cite{JLR:00}]
	\label{lem:chernoff}
	Let $X$ be the sum of independent Bernoulli random variables. Then for every $\delta \in (0,1)$ we have
	\[ \PP \Big[ |X-\EE[X]| \ge \delta \, \EE[X] \Big] \le 2 \exp \left( -\frac{\delta^2}{3} \EE[X] \right). \]
	Moreover, for every $k \ge 7\, \EE[X]$ we have $\PP[X > k] \le \exp (-k)$. 
\end{lemma}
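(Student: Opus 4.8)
The statement is the classical Chernoff bound, so the plan is simply to run the standard exponential-moment (Bernstein--Chernoff) argument and then verify the few elementary inequalities needed to extract the stated constants. Write $X = \sum_{i=1}^m X_i$ with the $X_i$ independent, $\PP[X_i=1] = p_i$, and set $\mu := \EE[X] = \sum_i p_i$. For $t>0$, Markov's inequality applied to $e^{tX}$, together with independence and the bound $1+x \le e^x$, gives
\[
\PP[X \ge a] \;\le\; e^{-ta}\,\EE[e^{tX}] \;=\; e^{-ta}\prod_{i=1}^m\bigl(1+p_i(e^t-1)\bigr) \;\le\; e^{-ta}\,e^{\mu(e^t-1)}.
\]
Taking $a = (1+\delta)\mu$ and optimising over $t$ (the optimum being $t = \ln(1+\delta)$) yields the familiar estimate $\PP[X \ge (1+\delta)\mu] \le \bigl(e^{\delta}/(1+\delta)^{1+\delta}\bigr)^{\mu}$ for every $\delta>0$, and the symmetric computation with $t<0$ gives $\PP[X \le (1-\delta)\mu] \le \bigl(e^{-\delta}/(1-\delta)^{1-\delta}\bigr)^{\mu}$ for $\delta \in (0,1)$.

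For the first displayed bound of the lemma I would then invoke the two elementary inequalities
\[
\frac{e^{\delta}}{(1+\delta)^{1+\delta}} \le e^{-\delta^2/3}
\qquad\text{and}\qquad
\frac{e^{-\delta}}{(1-\delta)^{1-\delta}} \le e^{-\delta^2/2} \le e^{-\delta^2/3}
\qquad (\delta \in (0,1)),
\]
each of which follows from a one-line calculus check: for the first, the function $g(\delta) = \delta-(1+\delta)\ln(1+\delta)+\delta^2/3$ satisfies $g(0)=0$ and $g'(\delta) = \tfrac{2}{3}\delta - \ln(1+\delta) \le 0$ on $[0,1)$, and an analogous monotonicity argument handles the lower tail. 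A union bound over the upper- and lower-tail events then gives
\[
\PP\bigl[\,|X-\EE[X]| \ge \delta\,\EE[X]\,\bigr] \;\le\; 2\exp\!\Bigl(-\tfrac{\delta^2}{3}\EE[X]\Bigr).
\]

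For the ``moreover'' part, given $k \ge 7\mu$ I would write $k = (1+\delta)\mu$, so that $1+\delta = k/\mu \ge 7$, and use the upper-tail estimate above:
\[
\PP[X > k] \;\le\; \Bigl(\frac{e^{\delta}}{(1+\delta)^{1+\delta}}\Bigr)^{\mu} \;=\; \frac{e^{k-\mu}}{(1+\delta)^{k}}.
\]
Taking logarithms, $\PP[X>k] \le e^{-k}$ is equivalent to $2 - \tfrac{\mu}{k} \le \ln(1+\delta)$, i.e.\ to $2 - \tfrac{1}{1+\delta} \le \ln(1+\delta)$; this holds at $1+\delta = 7$ (since $2-\tfrac17 < \ln 7$) and, as the function $2-\tfrac{1}{1+\delta}-\ln(1+\delta)$ has derivative $-\delta/(1+\delta)^2 < 0$, it continues to hold for all $1+\delta \ge 7$. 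Since each ingredient is a textbook computation I do not expect a genuine obstacle; the only point needing a little care is matching the precise constants, in particular verifying that the threshold $7$ (rather than $e^2 \approx 7.39$) already suffices for the second statement — which works exactly because one may use the identity $\mu = k/(1+\delta)$ rather than only $\mu \ge 0$.
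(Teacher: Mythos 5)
Your proof is correct, and it is the standard exponential-moment (Chernoff) derivation with the required constants checked by elementary calculus. The paper does not prove this lemma itself but cites \cite[Corollaries~2.3 and 2.4]{JLR:00}; your argument is precisely the classical proof underlying those corollaries, so there is nothing substantive to compare.
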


\section{Key structure theorem}
\label{sec:key_lemma}
In this section we establish our key structural result, characterizing 2-edge-colourings of $K_n^{(3)}$ which have no (or only few) gadgets.
Recall that a gadget is an edge-coloured copy $K$ of $K^{(3)}_{2,2,2}$ for which there is a colour $c$ such that the two Pasch configurations decomposing $K$ have a different number of edges in colour $c$.

\begin{theorem}
	\label{thm:gadgets}
	For any $\alpha >0$, there is $\delta >0$ such that for every $n$ large enough, every $2$-edge-colouring of $K_n^{(3)}$ satisfies (at least) one of the following properties:
	\begin{enumerate}[label=\rm{(\arabic*)}]
		\item There are at least $\delta n^6$ gadgets.
		\item There is a partition $[n] = X \cup Y$ such that after changing the colour of at most $\alpha n^3$ edges, the following holds: All edges inside $X$ and inside $Y$ have the same colour, and all edges intersecting both $X$ and $Y$ have the other colour. 
	\end{enumerate}	
\end{theorem}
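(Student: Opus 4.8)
The plan is to first reduce, via the induced hypergraph removal lemma, to colourings that contain \emph{no} gadget at all, and then to characterise those directly. Let $\cF$ be the finite family of all $3$-graphs $F$ on vertex set $[6]$ such that, reading $E(F)$ as the triples of one colour and the remaining triples as the other, some copy of $K^{(3)}_{2,2,2}$ inside $K^{(3)}_6$ is a gadget. Identifying a $2$-colouring of $K_n^{(3)}$ with the $3$-graph $\cH$ of the triples in colour $1$, whether a fixed $6$-set carries a gadget depends only on the induced colouring on it, so the colouring contains a gadget if and only if $\cH$ contains an induced copy of some $F\in\cF$. Since each gadget arises from only boundedly many induced copies of members of $\cF$ (the overcount being by automorphisms of $F$ that fix a pairing), a colouring with fewer than $\delta n^6$ gadgets has fewer than $C\delta\,n^{v(F)}$ induced copies of each $F\in\cF$ for an absolute constant $C$. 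Hence, choosing $\delta$ small relative to the $\delta_0$ that Lemma~\ref{lemma:removal} returns for the pair $(\cF,\alpha)$, whenever property~(1) fails we may recolour at most $\alpha n^3$ triples so that the resulting colouring has no gadget at all. It then suffices to prove that, for $n$ large, every gadget-free $2$-colouring of $K_n^{(3)}$ is exactly the colouring of Example~\ref{construction} for some partition $[n]=X\cup Y$ (up to switching the two colours, and possibly with $X$ or $Y$ empty).

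The next step is an algebraic reformulation. Writing the colouring as $x\colon\binom{[n]}{3}\to\{0,1\}$, in the two-colour case ``no gadget'' means $\sum_{e\in P_1}x_e=\sum_{e\in P_2}x_e$ for the two Pasch configurations $P_1,P_2$ decomposing every copy of $K^{(3)}_{2,2,2}$. I claim this is equivalent to the existence of a symmetric $t\colon\binom{[n]}{2}\to\bR$ with $x_{abc}=t(ab)+t(bc)+t(ca)$ for all triples $abc$. One direction is immediate: a Pasch configuration is a linear hypergraph, so $\sum_{e\in P}x_e=\sum_{\{p,q\}\in\mathrm{shadow}(P)}t(pq)$, and $P_1,P_2$ have the same shadow. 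Conversely, ``no gadget'' says precisely that the discrete mixed second difference $\Phi_a(b_1,b_2;c_1,c_2):=x_{ab_1c_1}-x_{ab_1c_2}-x_{ab_2c_1}+x_{ab_2c_2}$ does not depend on $a$ (whenever $a\notin\{b_1,b_2,c_1,c_2\}$); integrating this cocycle condition twice — first writing $x_{vuw}-x_{v'uw}$ as a separable function $\psi_{v,v'}(u)+\psi_{v,v'}(w)$ of the pair $\{u,w\}$, then recombining the functions $\psi_{v_0,v}$ using the symmetry of $x$ — produces the required $t$. For $n\ge5$ this $t$ is moreover unique.

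It remains to deduce, from $x_{abc}=t(ab)+t(bc)+t(ca)\in\{0,1\}$ for all triples with $n$ large, that $x$ is as in Example~\ref{construction}. Fix a vertex $v_0$ and put $\alpha_u:=t(v_0u)$; the triples through $v_0$ give $t(uw)=x_{v_0uw}-\alpha_u-\alpha_w$, and substituting this into the remaining triples shows that $\beta_u:=2\alpha_u$ satisfies $\beta_u+\beta_w+\beta_z=e_G(\{u,w,z\})-x_{uwz}\in\{-1,0,1,2,3\}$ for all distinct $u,w,z\ne v_0$, where $G$ is the link of $v_0$ in colour $1$ and $e_G(\{u,w,z\})$ denotes the number of its edges inside $\{u,w,z\}$. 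Hence all $\beta_u$ are congruent modulo $1$, and comparing two choices of $v_0$ shows that $2t(e)\bmod1$ is in fact a single constant $\rho\in\{0,\tfrac13,\tfrac23\}$ over all edges $e$. Examining the three largest and the three smallest $\beta_u$ forces all but at most four of them into $[-\tfrac13,1]$, i.e.\ all but $O(n)$ of the values $t(e)$ into $[-\tfrac16,\tfrac12]$; this rules out $\rho=\tfrac13$ (otherwise almost every $t(e)=\tfrac16$, so a typical triple has $x$-value $\tfrac12$) and, after possibly switching the two colours, leaves $\rho=\tfrac23$ with all but $O(n)$ of the $t(e)$ in $\{-\tfrac16,\tfrac13\}$. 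A careful chase through the triple constraints then upgrades this to $t(e)\in\{-\tfrac16,\tfrac13\}$ for \emph{all} $e$; at that point the graph $H:=\{e:t(e)=\tfrac13\}$ is transitive (from $t(uw)=t(wz)=\tfrac13$ one gets $x_{uwz}=\tfrac23+t(uz)\in\{0,1\}$, hence $t(uz)=\tfrac13$) and has no three pairwise non-adjacent vertices (three pairwise $-\tfrac16$ pairs give $x=-\tfrac12$), so $H=\binom X2\cup\binom Y2$ for a partition $[n]=X\cup Y$. Consequently $x_{abc}=1$ exactly when $\{a,b,c\}$ is monochromatic for this partition, which is the colouring of Example~\ref{construction}. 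Combined with the removal step, the original colouring differs from it in at most $\alpha n^3$ triples, giving property~(2).

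The main obstacle is this last upgrade from ``almost all'' pairs to ``all'' pairs, together with the closely related task of excluding stray values of $t$ (such as $\tfrac56$ or $-\tfrac23$) that are not ruled out by the congruence and boundedness constraints alone: one has to track the triple constraints through the $O(1)$ exceptional vertices around each $v_0$, and to treat separately the near-monochromatic regime in which one of $X,Y$ is very small. The cocycle-integration in the second step, while routine, is also somewhat lengthy to make precise.
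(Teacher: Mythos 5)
Your reduction via the induced removal lemma to gadget-free colourings coincides with the paper's, and the subsequent step --- encoding ``no gadget'' as the existence of a potential $t\colon \binom{[n]}{2}\to\bR$ with $x_{abc}=t(ab)+t(bc)+t(ca)$, and then constraining $t$ --- is a genuinely different and rather elegant route. The paper instead studies, for each pair $x,y$, the auxiliary colouring $f_{xy}(uv)=\chi(xuv)-\chi(yuv)$ of $K_{n-2}$ and proves a structure lemma for $\{0,\pm1\}$-colourings with no ``unbalanced $C_4$''; this is essentially the same cocycle condition in different clothing, but the paper's version avoids ever constructing $t$ globally.

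However, the target of your final step --- that a gadget-free $2$-colouring is \emph{exactly} the colouring of Example~\ref{construction} --- is false, which is why the ``upgrade from almost all pairs to all pairs'' you flag as the main obstacle cannot be completed. A concrete counterexample: fix a pair $vw$, set $t(vw)=1$ and $t(e)=0$ for all other $e$, i.e.\ colour a triple $1$ precisely when it contains both $v$ and $w$. Since $x$ has a potential it is gadget-free (one can also check directly that every copy of $K^{(3)}_{2,2,2}$ gives both Pasch configurations the same colour profile), yet for $n\ge 4$ this is not the colouring of Example~\ref{construction} for any partition. In your parametrisation this is exactly the kind of stray value of $t$ that congruence and boundedness permit. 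The remedy is to aim only for the $O(n^2)$-approximate statement the paper actually isolates as Proposition~\ref{prop:no gadgets}: after changing $O(n^2)$ triples, the colouring becomes that of Example~\ref{construction}. Your argument already delivers this: the $O(n)$ pairs $e$ with $t(e)\notin\{-\tfrac16,\tfrac13\}$ (after the colour switch) touch only $O(n^2)$ triples, which may be recoloured freely; on the remaining near-complete graph, $H:=\{e: t(e)=\tfrac13\}$ meets every triangle in an odd number of edges, which pins down $H$ as the trace of $\binom{X}{2}\cup\binom{Y}{2}$ for some partition $[n]=X\cup Y$, and the $O(n^2)$ error is then absorbed into the $\alpha n^3$ budget exactly as in the paper. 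One further item worth tightening is the cocycle-integration step: passing from the $a$-independence of $\Phi_a$ to a symmetric global $t$ requires checking that the local separable representations $\psi_{v,v'}$ can be normalised and glued consistently and that the resulting $t$ is symmetric; this is true but not quite as automatic as ``routine'' suggests.
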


	To prove Theorem~\ref{thm:gadgets}, it is enough to prove the following proposition, which makes the stronger assumption that the $2$-edge-colouring of $K_n^{(3)}$ has no gadgets at all. We will then combine this with the induced hypergraph removal lemma (Lemma~\ref{lemma:removal}) to deduce Theorem~\ref{thm:gadgets}.

	\begin{prop}\label{prop:no gadgets}
		In every $2$-edge-colouring of $K_n^{(3)}$ with no gadgets, there is a partition $[n] = X \cup Y$ such that after changing the colour of $O(n^2)$ edges, the following holds: All edges inside $X$ and inside $Y$ have the same colour, and all edges intersecting both $X$ and $Y$ have the other \nolinebreak colour. 
	\end{prop}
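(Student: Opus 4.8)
The plan is to understand the local structure at each pair of vertices. Fix a $2$-edge-colouring of $K_n^{(3)}$ with no gadgets, with colour classes red and blue. For a pair $\{x,y\}$, its \emph{link} is the $2$-colouring of the remaining $n-2$ vertices given by $z \mapsto$ colour of $xyz$. The gadget-freeness should force each link to be \emph{almost monochromatic} (all but $O(1)$ vertices the same colour): indeed, if some link had $\Omega(n)$ vertices of each colour, one ought to be able to pick two red-heavy and two blue-heavy vertices, together with $x,y$, to build a copy of $K_{2,2,2}^{(3)}$ whose two Pasch configurations differ in their red count — so we first carry out this case analysis carefully to show every link is monochromatic up to a bounded number of exceptional vertices, and in fact (after possibly recolouring $O(n^2)$ edges in total) we may assume every link is genuinely monochromatic. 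This partitions the pairs into ``red pairs'' and ``blue pairs''; equivalently, we obtain a $2$-colouring $\chi$ of $E(K_n)$, where $\chi(\{x,y\})$ is the (now constant) colour of the link of $\{x,y\}$.

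Next I would show that $\chi$ itself is highly structured. The key point is that the colour of a triple $xyz$ is determined in three ways — by the links of $\{x,y\}$, of $\{x,z\}$, and of $\{y,z\}$ — so for \emph{every} triangle $xyz$ in $K_n$ we get $\chi(xy) = \chi(xz) = \chi(yz)$ (since all three equal the colour of the triple $xyz$). In other words, $\chi$ contains no triangle with two differently-coloured edges. But $K_n$ is covered by triangles, so this forces $\chi$ to be constant on each connected component of either colour class; since $K_n$ is connected in any spanning-connected sense, an easy argument shows $\chi$ is monochromatic on $K_n$ unless the red graph and blue graph under $\chi$ form a \emph{complete bipartite split}: there is a partition $[n] = X \cup Y$ so that $\chi(e)$ is one colour for $e$ inside $X$ or inside $Y$, and the other colour for $e$ crossing. (The monochromatic case is the degenerate sub-case $Y = \emptyset$.) Translating back: a triple inside $X$ or inside $Y$ has a link pair inside the same part, hence gets the ``same-part'' colour, while a triple meeting both $X$ and $Y$ contains a crossing pair, hence gets the ``crossing'' colour — which is exactly the conclusion of the proposition.

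I expect the \textbf{main obstacle} to be the first step: proving rigorously that gadget-freeness forces every link to be almost monochromatic. This requires a concrete construction — given a pair $\{x,y\}$ with many red and many blue neighbours in its link, exhibit four further vertices $a,b,c,d$ so that the copy of $K_{2,2,2}^{(3)}$ on $\{x,y\},\{a,b\},\{c,d\}$ is a gadget — and one has to check that the colours can be arranged so the two Pasch configurations $\{xac, xbd, ybc, yad\}$ and $\{xad, xbc, ybd, yac\}$ differ in, say, their red count. This needs a careful sub-case analysis of how red/blue can distribute among the eight edges of $K_{2,2,2}^{(3)}$, and it is where the bounded number of ``exceptional vertices'' per link (and the resulting $O(n^2)$ recolouring budget) will come from; one must also ensure the various almost-monochromatic links can be simultaneously cleaned up with a total of $O(n^2)$ recolourings, which follows since there are $\binom{n}{2}$ links each with $O(1)$ exceptions. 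The later steps — from almost-monochromatic links to the bipartite split — are purely combinatorial and routine once the first step is in place.
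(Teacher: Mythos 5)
Your first step---that gadget-freeness forces each link to be almost monochromatic---is false, and this sinks the plan. The colouring in Example~\ref{construction} is gadget-free, yet the link of a pair $\{x,y\}$ with $x,y \in X$ assigns red to every $z \in X \setminus \{x,y\}$ and blue to every $z \in Y$, so \emph{both} colours occupy $\Omega(n)$ vertices in this link. No $O(n^2)$ recolouring can repair this, since $\Theta(n^2)$ such pairs each need $\Omega(n)$ changes. Your proposed gadget construction also does not fire: in a copy of $K_{2,2,2}^{(3)}$ with parts $\{x,y\},\{a,b\},\{c,d\}$, the eight edges $xac,\dots,ybd$ never contain the pair $xy$ as a subset, so their colours are completely unconstrained by the link of $\{x,y\}$; a direct check shows every such copy in Example~\ref{construction} has equal colour counts on its two Pasch configurations, regardless of how $a,b,c,d$ are placed in $X$ or $Y$.

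Even taking step 1 for granted, step 2 is internally inconsistent. You derive $\chi'(xy) = \chi'(xz) = \chi'(yz)$ for every triple $\{x,y,z\}$, i.e.\ every triangle of $K_n$ is $\chi'$-monochromatic. But then any two edges sharing a vertex lie in a monochromatic triangle, so they agree, and $\chi'$ is constant on all of $E(K_n)$. There is no ``bipartite split'' escape: the split you describe has many non-monochromatic triangles (for $x,y\in X$, $z\in Y$, the triangle $xyz$ has one inside edge and two crossing edges), so it violates the very condition you just established. So your argument would prove the (false) statement that every gadget-free $2$-colouring is $O(n^2)$-close to monochromatic.

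The correct local object is not the link of the pair $\{x,y\}$ but the \emph{difference} function $f_{xy}(uv) := \chi(xuv) - \chi(yuv)$, a $\{0,\pm 1\}$-colouring of the pairs on $[n]\setminus\{x,y\}$. Gadget-freeness is exactly the statement that each $f_{xy}$ has no unbalanced $C_4$, i.e.\ $f_{xy}(ab)+f_{xy}(cd) = f_{xy}(bc)+f_{xy}(ad)$ for all distinct $a,b,c,d$. Such colourings are rigid but not monochromatic: up to deleting $O(1)$ vertices, $f_{xy}$ is either identically $0$ or splits into a $+1$-clique and a $-1$-clique with $0$'s across (this is Lemma~\ref{lem:unbalanced C_4} in the paper). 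Classifying pairs $xy$ as ``even'' or ``odd'' by which case holds, proving that this parity is additive via $f_{xz} = f_{xy} + f_{yz}$, and only then extracting a partition $X\cup Y$ of vertices, is what actually recovers Example~\ref{construction} as the extremal structure.
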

	\noindent
	We now give the quick derivation of Theorem~\ref{thm:gadgets} from Proposition~\ref{prop:no gadgets} and Lemma~\ref{lemma:removal}.
    
	\begin{proof}[Proof of Theorem~\ref{thm:gadgets}]
        Let $\mathcal{F}$ be the family of 6-vertex $3$-graphs $F$ such that if the edges of $F$ are coloured red and the non-edges of $F$ are coloured blue, then this 2-colouring of $K_6^{(3)}$ contains a gadget. Then a red/blue colouring of $E(K_n^{(3)})$ has no gadgets if and only if the hypergraph $\mathcal{H}$ of red edges is induced $\mathcal{F}$-free. 
        Choose $\delta>0$ such that Lemma~\ref{lemma:removal} applies with $k=3$ and $\alpha/2$ in place of $\alpha$. 
        
		Now consider any red/blue colouring of $E(K_n^{(3)})$, and let $\mathcal{H}$ be the hypergraph consisting of the red edges. 
		If the colouring has at least $\delta n^6$ gadgets, we are done. Otherwise, $\mathcal{H}$ has at most $\delta n^6$ induced copies of 3-graphs from $\mathcal{F}$. 
		Hence, by Lemma~\ref{lemma:removal}, we can change the colour of at most $\alpha n^3/2$ edges of $K_n^{(3)}$ to obtain a colouring having no induced copies of any $F \in \mathcal{F}$, and hence no gadgets. We can then apply Proposition~\ref{prop:no gadgets} to this colouring.
        Overall, we have changed the colour of at most $\alpha n^3/2 + O(n^2) \leq \alpha n^3$ edges, where the inequality holds since $n$ is large enough.
	\end{proof}

    We now move on to the proof of Proposition~\ref{prop:no gadgets}. We will consider edge-colourings $f \colon E(K_n) \rightarrow \{0,\pm 1\}$. In such a colouring, an {\em unbalanced $C_4$} consists of 4 distinct vertices $a,b,c,d$ and the edges $ab,bc,cd,da$ such that $f(ab) + f(cd) \neq f(bc) + f(ad)$. 
    The following lemma characterizes the structure of $\{0,\pm 1\}$-edge-colourings of $K_n$ with no unbalanced~$C_4$. 
    \begin{lemma}\label{lem:unbalanced C_4}
    	Let $n \geq 50$. Let $f \colon E(K_n) \rightarrow \{0,\pm 1\}$ with no unbalanced $C_4$. Then, after deleting at most $8$ vertices, one of the following holds:
    	\begin{enumerate}[label=\rm{(\arabic*)}]
    		\item All edges have colour $0$.
			\item There is a vertex-partition $A \cup B$ such that all edges inside $A$ have colour $1$, all edges inside $B$ have colour $-1$, and all edges between $A$ and $B$ have colour $0$.
    	\end{enumerate}
    \end{lemma}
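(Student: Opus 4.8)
The plan is to first understand what "no unbalanced $C_4$" says about the colouring locally, and then bootstrap this into global structure. For a vertex $v$, define its \emph{type sequence} to be the map $u \mapsto f(uv)$; two vertices $u,u'$ are said to be \emph{linked} if $f(uu') \ne 0$. The key first observation is a consequence of testing the $C_4$ condition on $a,b,c,d$: if we fix three vertices and vary the fourth, the quantity $f(ab)+f(cd)-f(bc)-f(ad)$ must vanish, which forces a strong compatibility between the "rows" $f(u\cdot)$ of the colouring matrix. Concretely, I would first prove the following \textbf{pairing claim}: for any two vertices $u,w$ and any other two vertices $x,y$, we have $f(ux)-f(uy) = f(wx)-f(wy)$ \emph{whenever} $\{u,w\}$ and $\{x,y\}$ are disjoint; equivalently, the difference $f(ux)-f(wx)$ does not depend on $x \in [n] \setminus \{u,w\}$. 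This is just the unbalanced-$C_4$ condition rearranged. Since $f$ takes only three values $\{0,\pm1\}$, this constant difference $c(u,w) := f(ux)-f(wx)$ lies in $\{0,\pm 1, \pm 2\}$.

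Next I would exploit the constraint that $f$ is symmetric and takes values in a $3$-element set. Using the pairing claim, $f(uw)$ itself should be comparable with the "predicted" value coming from a third vertex: for a vertex $z \notin \{u,w\}$ we have $f(uz) - f(wz) = c(u,w)$ and also, applying the claim with the roles swapped, $f(wu)-f(zu)$ is independent of its last argument and $f(uw)-f(zw)$ likewise; combining these should pin down $f(uw)$ in terms of the differences. The cleanest route: observe that $u \mapsto f(uz)$ differs from $u \mapsto f(uz')$ by an additive constant (on the common domain), so \emph{all} rows of the matrix agree up to an additive constant. Writing $f(uz) = g(u) + h(z)$ on off-diagonal entries for suitable functions $g,h$, symmetry forces $g(u)+h(z) = g(z)+h(u)$, i.e.\ $g - h$ is constant, so after absorbing constants we may write $f(uz) = g(u)+g(z)+\kappa$ for all $u \ne z$, for some function $g$ and constant $\kappa$. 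This is the structural heart of the argument, and I expect it to be \textbf{the main obstacle}: one has to be careful that the additive-constant relation holds globally (it is an a priori only pairwise/local statement), which is where connectivity of the "disjointness" relation on pairs — valid once $n$ is not tiny, hence the hypothesis $n \ge 50$ — gets used, and one must also handle the diagonal/degenerate cases where the relevant vertex sets are not disjoint.

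Finally, I would extract the conclusion from the representation $f(uz) = g(u)+g(z)+\kappa \in \{0,\pm1\}$. Since the sum $g(u)+g(z)+\kappa$ must land in $\{-1,0,1\}$ for \emph{every} pair, the multiset of values of $g$ is extremely restricted: apart from a bounded number of exceptional vertices (which we delete — this is the source of the "at most $8$ vertices" slack), $g$ takes at most two values, and these values together with $\kappa$ must be such that every pairwise sum is in range. A short case analysis on $\kappa$ and on the (at most two) bulk values of $g$ then yields exactly the two possibilities: either everything is $0$ (when $g \equiv -\kappa/2$ on the bulk, forcing the common value $0$), or $g$ takes two values on a partition $A \cup B$ giving colours $1$ inside $A$, $-1$ inside $B$, $0$ across — which is outcome~(2). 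To bound the number of deleted vertices by $8$, I would argue that if three or more "large" value-classes of $g$ existed, or if a value class forced an out-of-range sum, we could find an unbalanced $C_4$ among bulk vertices directly; hence all but at most a constant number of vertices fall into the $\le 2$ admissible classes, and tracking the constants through the three-valued constraint gives the explicit bound $8$.
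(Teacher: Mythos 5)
Your plan is correct, but it takes a genuinely different route from the paper's. The paper argues combinatorially: it takes maximum matchings $M_1,M_{-1}$ in colours $\pm 1$, shows $V(M_1)$ and $V(M_{-1})$ are monochromatic cliques while the leftover set $V_0$ spans only $0$-edges, controls the cross-class edges by pigeonhole plus the K\H{o}nig--Egerv\'ary theorem, and concludes by distinguishing whether $|V_0|$ or $n-|V_0|$ is small (this is precisely where $n\ge 50$ enters). You instead note that ``no unbalanced $C_4$'' says the symmetric matrix $(f(uv))_{u\ne v}$ has constant row-differences, which (after handling the diagonal) forces the additive representation $f(uv)=g(u)+g(v)+\kappa$ with $g$ integer-valued; the constraint that all pairwise sums lie in $\{-1,0,1\}$ then restricts $g$ to at most two value-classes of size $\ge 2$, from which the two outcomes drop out. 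This is cleaner and quantitatively sharper -- it only needs to delete $2$ vertices and works already for $n\ge 5$ -- while the paper's argument avoids setting up the potential function. One imprecision in your final step: once $f(uv)=g(u)+g(v)+\kappa$ is established, \emph{every} such $f$ automatically avoids unbalanced $C_4$'s, so you cannot exclude a third large value-class of $g$ by ``finding an unbalanced $C_4$ among bulk vertices''; the correct mechanism is the range constraint alone (four values $a<b<c<d$ of $g$ give four distinct cross-sums $a{+}b<a{+}c<b{+}d<c{+}d$ which cannot all lie in $\{-1,0,1\}$, and with three values the two extremes are forced to be singletons). Also note that when the bulk of $g$ is constant with $2g+\kappa=\pm 1$, you land in outcome (2) with one of $A,B$ equal to the entire vertex set, not in outcome (1).
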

	\begin{proof}
		Let $M_1$ (resp.~$M_{-1}$) be a largest matching of edges of colour $1$ (resp.~$-1$). We first claim that every edge inside $V(M_1)$ is coloured with colour $1$, and similarly for $M_{-1}$. So let $ab,cd \in M_1$. If one of the edges between $\{a,b\}$ and $\{c,d\}$ has colour different from $1$, then we get an unbalanced $C_4$. Indeed, without loss of generality, suppose that $f(bc) \neq 1$. Then $f(ab) + f(cd) = 2$ while $f(bc) + f(ad) \leq f(ad) \leq 1$, so $a,b,c,d$ is an unbalanced $C_4$. This shows that $V_1 := V(M_1)$ is a clique in colour $1$. Similarly, $V_{-1} := V(M_{-1})$ is a clique in colour $-1$. 
		Also, by the maximality of $M_1$ and $M_{-1}$, all edges inside $V_0 := [n] \setminus (V_1 \cup V_{-1})$ have colour 0. 
		
		Next, we claim that for all distinct $\alpha,\beta \in \{0,\pm 1\}$, all but at most $3n$ of the edges in $E(V_{\alpha},V_{\beta})$ have colour $\frac{\alpha+\beta}{2}$. 
        (Note that it could happen that $\frac{\alpha+\beta}{2} \not\in \{0,\pm 1\}$.)
        Indeed, suppose otherwise, i.e., that at least $3n+1$ of the edges in $E(V_{\alpha},V_{\beta})$ do not have this colour. 
		Then, by the pigeonhole principle (over the colours), there are at least $n+1$ edges of the same colour, say $\gamma$ with $\gamma \neq \frac{\alpha+\beta}{2}$.
        Among these edges, there must exist two vertex-disjoint ones, say $ab,cd \in E(V_{\alpha},V_{\beta})$. Without loss of generality, $a,d \in V_{\alpha}$ and $b,c \in V_{\beta}$. Moreover, $f(ab)=\gamma=f(cd)$.
        But now 
		$f(ad) + f(bc) = \alpha+ \beta \neq 2\gamma = f(ab) + f(cd)$, meaning that $a,b,c,d$ is an unbalanced $C_4$. This proves our claim.
		
		If $\alpha = 0$ and $\beta \in \{\pm 1\}$, then $\frac{\alpha+\beta}{2}$ is not an integer, so no edge can have colour $\frac{\alpha+\beta}{2}$. This means that 
		$|V_0| \cdot |V_{1}| \leq 3n$ and $|V_0| \cdot |V_{-1}| \leq 3n$. 
		So $|V_0|(n-|V_0|) \leq 6n$. Hence, $|V_0| \leq 6$ or $n-|V_0| \leq 6$ (using $n \geq 50$). In the latter case, after deleting the at most 6 vertices in $[n] \setminus V_0$, all remaining edges have colour $0$, meaning that Item 1 in the lemma holds. 
		
		Suppose now that $|V_0| \leq 6$. Delete the vertices of $V_0$. Set $A := V_1$ and $B := V_{-1} \setminus V_1$, so that $A,B$ are disjoint. By the same argument as above, there is no matching of size $3$ in $E(A,B)$ whose edges have colours $1$ or $-1$ only. Indeed, otherwise, by the pigeonhole principle, there would be two edges between $A$ and $B$ with the same colour $\gamma \neq 0$, and this would give an unbalanced $C_4$. 
		By the K\H{o}nig--Egerv\'ary theorem, we can delete at most 2 vertices such that in the remaining graph, all edges between $A$ and $B$ have colour $0$. Thus, Item 2 in the \nolinebreak lemma \nolinebreak holds.  
	\end{proof}

We also need the following result concerning 2-edge-colourings of $K_n^{(3)}$.
Given (not necessarily disjoint) $A,B,C \subseteq [n]$, we say that an edge $abc$ is of \defn{type} $ABC$ if $a \in A, b \in B, c \in C$.

\begin{lemma}\label{lem:multicolor tight pairs new}
	Let $\chi$ be a $2$-edge-colouring of $K_n^{(3)}$ and let $A, B, C \subseteq [n]$ with $|A|,|B|,|C| \ge 360$. 
    Suppose that every two of the sets $A,B,C$ are disjoint or equal.
    Also, suppose that there are at least $m$ edges of type $ABC$ in each colour. Then there exists $Z \in \{A,B,C\}$ such that there are at least $\frac{m \cdot |Z|}{900}$ pairs of edges $e,e'$ of type $ABC$ with $\chi(e) \neq \chi(e')$, $|e \cap e'| = 2$ and $e \bigtriangleup e' \subseteq Z$.
\end{lemma}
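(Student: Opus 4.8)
The statement concerns a $2$-colouring $\chi$ of triples of type $ABC$, where the three sets are pairwise equal-or-disjoint, and asserts that if both colour classes are large ($\ge m$) then one of the three sets, say $Z$, hosts many bichromatic ``flip pairs'' — pairs of edges differing in one vertex lying in $Z$. The natural approach is a double-counting / averaging argument over the possible ``anchors'' (the common $2$-set $e\cap e'$) of such flip pairs, combined with a dichotomy: either many anchors already see both colours through their extensions into some $Z$, or almost every anchor is essentially monochromatic in its extensions, and then one extracts a large ``rainbow-poor'' structure which is shown to be impossible because both colour classes are large.

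\textbf{Step 1: set up the counting over anchors.} I would first reduce to the case where the three sets are all equal (the case $A=B=C=[n]$ restricted, giving triples within one set) and the case where they are pairwise disjoint; mixed cases (two equal, one disjoint) are handled analogously by bookkeeping which of the three ``slots'' a flipped vertex occupies. For the disjoint case, fix the convention that an edge of type $ABC$ is written $(a,b,c)$ with $a\in A$, etc. A flip pair with $e\triangle e'\subseteq Z$, say $Z=A$, consists of two edges $(a,b,c)$ and $(a',b,c)$ with $a\ne a'\in A$ and $\chi$-values differing. So for each anchor pair $\{b,c\}\in B\times C$, let $A_0(bc), A_1(bc)$ be the sets of $a\in A$ giving colour $0$, resp.\ $1$; the number of $A$-flip pairs with this anchor is $|A_0(bc)|\,|A_1(bc)|$. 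Summing over all anchors in all three ``slots'' gives the total number $N$ of flip pairs of all three types. I want to show $N\ge \frac{m|Z|}{900}$ for some $Z$, i.e.\ $N\ge \frac{m}{900}\min(|A|,|B|,|C|)$ would already suffice up to adjusting constants — but actually one wants the $|Z|$ matching the $Z$ achieving the bound, so one should be a little careful and argue per slot.

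\textbf{Step 2: the dichotomy and the main obstacle.} The heart of the matter: suppose for contradiction that for \emph{every} $Z\in\{A,B,C\}$ the number of $Z$-flip pairs is less than $\frac{m|Z|}{900}$. I would like to conclude that one colour class is small. The key observation is that a $2$-colouring with very few flip pairs in each direction is highly structured: for most anchors $\{b,c\}$, the extension into $A$ is almost entirely one colour (since $|A_0||A_1|$ summed is small and each factor is at most $|A|$, by convexity most anchors have $\min(|A_0|,|A_1|)$ tiny, in fact on average at most roughly $m/(900\,|B||C|)\cdot$(something) — more precisely at least a $(1-\tfrac1{3})$-fraction of anchors have $\min(|A_0(bc)|,|A_1(bc)|)\le \frac{|A|}{100}$, say, provided $m$ is not too small relative to $|B||C|$; the regime $m$ small is handled separately since then the claim is weaker). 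So we may $2$-colour ``most'' anchors in each of the three pair-types by their majority colour. Now the combinatorial core is to show that three such majority-colourings on the three pair-classes $B\times C$, $A\times C$, $A\times B$ that are simultaneously ``globally consistent'' (few flips in each slot) force one colour to dominate almost all triples — contradicting that both classes have $\ge m$ edges if $m=\Omega(n^3)$, and for small $m$ one argues directly. I expect \textbf{this consistency step to be the main obstacle}: one needs a Ramsey/extremal-type argument, probably: pick a typical vertex $a^*\in A$ (typical meaning $a^*$'s ``profile'' over $B\times C$ is close to the average majority pattern), which partitions $B\times C$ into a $0$-region and a $1$-region up to few exceptions; doing the same for a second typical $a^{**}$ and using few $A$-flips forces these two partitions to nearly agree; iterating over all typical $a\in A$ shows the colour of $(a,b,c)$ is essentially independent of $a$, i.e.\ determined by $(b,c)$, so $\chi$ is (up to $o(n^3)$ edges) a function of the $BC$-pair alone. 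By symmetry it is also essentially a function of the $AC$-pair and of the $AB$-pair alone; a short argument (a ``three-way'' consistency) then forces it to be essentially constant, so one colour class is $o(n^3)$, contradicting $m=\Omega(n^3)$. Quantitatively one must track the constants carefully to land on $\frac1{900}$ and the threshold $360$ on the set sizes — these are presumably chosen with slack so that the convexity estimates in Step 2 (e.g.\ ``most anchors have small minority'') go through.

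\textbf{Step 3: handling small $m$ and assembling constants.} If $m$ is below a threshold like $c\,n^2$ for a small absolute constant $c$, the desired conclusion $N\ge\frac{m|Z|}{900}$ is comparatively weak, and I would prove it by a cruder argument: pick $\Theta(m/n)$ disjoint edges of colour $0$ and as many of colour $1$ of type $ABC$ (possible since both classes have $\ge m$ edges and — using $|A|,|B|,|C|\ge 360$ — one can greedily extract linear-in-$m/n$ sized colour classes of pairwise-disjoint edges); pairing them up and, within each pair, ``walking'' from one edge to the other by changing coordinates one at a time, one hits a flip pair along the way; some slot $Z$ gets $\Omega(m/n)\cdot|Z|/n\cdot$\dots — again after arranging the bookkeeping this gives the bound with room to spare. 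Finally I would choose the internal constants so that in \emph{all} regimes one of $A,B,C$ achieves at least $\frac{m|Z|}{900}$ flip pairs; the number $900=30^2$ and the size bound $360$ suggest the intended proof loses a factor around $30$ at two independent places (once in a convexity/averaging step, once in a Turán-type ``matching of size $2$'' step, echoing the $3n$ loss in Lemma~\ref{lem:unbalanced C_4}), so matching those two losses is what pins down the constants.
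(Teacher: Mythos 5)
Your Step 3 contains the paper's key idea --- interpolate between two disjoint, differently coloured edges $abc$ and $a'b'c'$ by changing one coordinate at a time ($abc \to abc' \to ab'c' \to a'b'c'$), and note that some consecutive step must be a colour-change, which is exactly a ``flip pair'' in one of the slots $A$, $B$, $C$. Unfortunately you relegate this to a fallback regime and apply it to a \emph{matching} of $\Theta(m/n)$ disjoint red edges paired with $\Theta(m/n)$ blue ones, producing on the order of $m/n$ flip pairs. The lemma requires $\Omega(m\,|Z|)$ flip pairs, so this undercounts by roughly a factor $n\,|Z|$. The paper instead runs the interpolation over \emph{all} ordered pairs $(f,f')$ where $f$ is a red edge of type $ABC$ and $f'$ is a disjoint blue edge of the same type: there are at least $m$ choices for $f$ and, since blue is the majority class, at least $e(A,B,C)/2 - |A||B|-|B||C|-|A||C| \ge |A||B||C|/30$ choices for $f'$ (this is where $|A|,|B|,|C|\ge 360$ and the first factor of $30$ enter). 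Each of the resulting $\ge m|A||B||C|/30$ pairs yields a $6$-set containing a flip, each $6$-set is counted at most $\binom{6}{3}/2=10$ times, and after averaging over the three slots (factor $3$) one gets $\ge m|A||B||C|/900$ six-sets with a flip in some fixed slot, say $A$. Finally, a flip pair with $e\triangle e'\subseteq A$ lies in at most $|B||C|$ of the relevant $6$-sets, giving $\ge m|A|/900$ distinct flip pairs. So $900 = 30\cdot 10\cdot 3$, not $30^2$; no ``matching of size $2$ / K\H{o}nig'' step is involved here.

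Your Step 2 is the more serious gap. You propose a contradiction argument: assume all three slots have few flip pairs, deduce that the colouring is close to constant, and contradict ``$m=\Omega(n^3)$''. But $m$ is \emph{not} assumed to be $\Omega(n^3)$ --- the lemma must hold for every $m$ at which both colour classes have $\ge m$ edges --- so even a complete version of Step 2 would not yield a contradiction, and your own Step 3 does not cover the remaining range for the reason above. Moreover you identify the ``consistency'' step (three near-functions of $BC$, $AC$, $AB$ forcing near-constancy) as ``the main obstacle'' without supplying it, so Step 2 is not a proof even in the large-$m$ regime. The lemma does not require any structural dichotomy: it is a pure double-count, and the structural consequences (majority colours per anchor, etc.) are drawn \emph{later} in the paper, in the proof of Proposition~\ref{prop:no gadgets}, using this lemma as a black box. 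The fix is to take the interpolation idea out of your Step 3, run it over all disjoint bichromatic pairs rather than a matching, and finish with the two divisibility bounds (at most $10$ pairs per $6$-set, at most $|B||C|$ six-sets per flip pair); then Steps 1 and 2 can be discarded entirely.
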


\begin{proof}
	Let the two colours be red and blue, and assume that red is the smallest colour class with respect to edges of type $ABC$.
    Every time we consider an edge, it is implicitly assumed that it is of type $ABC$, and we denote by $e(A,B,C)$ the number of edges of this type.
    We first consider pairs of edges $f = abc, \, f' = a'b'c'$ with $a,a' \in A$, $b,b' \in B$, $c,c' \in C$, $\chi(f) \neq \chi(f')$ and $f \cap f' = \emptyset$. 
    Then the number of such pairs is at least 
	$m \cdot \Big( \frac{e(A,B,C)}{2} - |A||B| - |B||C|-|A||C|\Big) \geq m \cdot \frac{|A||B||C|}{30}$. Indeed, we have at least $m$ choices to choose a red edge, and for each such choice, we then have to choose a blue edge which is vertex-disjoint from the chosen red one. 
    There are at least $\frac{e(A,B,C)}{2}$ blue edges, and at most $|A||B| + |B||C| + |A||C|$ of them intersect the chosen red edge.
    Moreover, $\frac{e(A,B,C)}{2} \ge \frac{|A|(|B|-1)(|C|-2)}{12} \ge \frac{|A||B||C|}{24}$ and
    \begin{align*}
        \frac{e(A,B,C)}{2} - |A||B| - |B||C|-|A||C| &\ge \frac{|A||B||C|}{24} - |A||B| - |B||C|-|A||C| \\
        & \ge |A||B||C| \left(\frac{1}{24}-\frac{1}{|C|}-\frac{1}{|A|}-\frac{1}{|B|}\right) \ge \frac{|A||B||C|}{30}\, ,
    \end{align*}
    where we used that $|A|,|B|,|C| \geq 360$.

    Now, fix a pair $f,f'$ as above.
    Consider the 4 edges 
	$f_1 = f = abc, f_2 = abc', f_3 = ab'c', f_4 = f' = a'b'c'$. Then 
	$|f_i \cap f_{i+1}| = 2$ for every $1 \leq i \leq 3$. As $\chi(f_1) \neq \chi(f_4)$, there must exist $1 \leq i \leq 3$ with $\chi(f_i) \neq \chi(f_{i+1})$. 
    Note that if $i=1$ then $f_i \bigtriangleup f_{i+1}=\{c,c'\}$, if $i=2$ then $f_i \bigtriangleup f_{i+1}=\{b,b'\}$, and if $i=3$ then $f_i \bigtriangleup f_{i+1}=\{a,a'\}$.
    Therefore, in any case, $f_i \bigtriangleup f_{i+1} \subseteq Z$ for some $Z \in \{A,B,C\}$.
    This shows that there are at least 
	$\frac{2}{\binom{6}{3}}\frac{m|A||B||C|}{30} = \frac{m|A||B||C|}{300}$ vertex sets of size six which contain a pair of edges $e,e'$ with $\chi(e) \neq \chi(e')$, $|e \cap e'| = 2$ and $e \bigtriangleup e' \subseteq Z$ for some $Z \in \{A,B,C\}$. 
	The factor of $2 \cdot \binom{6}{3}^{-1}$ accounts for the fact that each such 6-set is counted at most $\frac{1}{2} \cdot \binom{6}{3}$ times. 
    
    By averaging, there exists $Z \in \{A,B,C\}$ such that for at least $\frac{m|A||B||C|}{900}$ of the 6-sets considered above it holds that $e \bigtriangleup e' \subseteq Z$.
    Suppose that $Z=A$; the other cases (i.e., $Z = B$ or $Z = C$) are analogous.
    Each pair $e,e'$ with $|e \cap e'| = 2$ and $e \bigtriangleup e' \subseteq A$ is contained in at most $|B||C|$ vertex sets of size six with two vertices from each part, where we used the assumption that any two of the sets $A,B,C$ are disjoint or equal.
    Hence, the number of pairs $e,e'$ with $\chi(e) \neq \chi(e')$, $|e \cap e'| = 2$ and $e \bigtriangleup e' \subseteq A$ is at least 
    $$\frac{m|A||B||C|/900}{|B||C|} = \frac{m |A|}{900}\, .$$
\end{proof}

We are now ready to prove Proposition~\ref{prop:no gadgets}.
    \begin{proof}[Proof of Proposition~\ref{prop:no gadgets}]
    	We may and will assume that $n$ is large enough. 
    	Let $\chi : E(K_n^{(3)}) \rightarrow \{0,1\}$ be a 2-edge-colouring of $K_n^{(3)}$ with no gadgets. For distinct vertices $x,y \in [n]$, let $f_{xy}$ be the function $f_{xy} : \binom{[n] \setminus \{x,y\}}{2} \rightarrow \{0,\pm 1\}$ given by $f_{xy}(uv) = \chi(xuv) - \chi(yuv)$. Thus, $f_{xy}$ is a colouring with colours $0,\pm 1$ of the (2-uniform) clique on the $n-2$ vertices $[n] \setminus \{x,y\}$. Also, observe that $f_{xy}=-f_{yx}$.
        We first claim that $f_{xy}$ has no unbalanced $C_4$. Indeed, let $a,b,c,d \in [n] \setminus \{x,y\}$ be distinct vertices. Consider the copy of $K^{(3)}_{2,2,2}$ with parts $\{x,y\},\{a,c\},\{b,d\}$. The two Pasch configurations decomposing this copy are $P_1 = \left\{ xab, xcd, yad, ybc  \right\}$ and
    	$P_2 = \left\{ yab, ycd, xad, xbc  \right\}$. Since there is no gadget, we have $\sum_{e\in P_1} \chi(e) = \sum_{e \in P_2} \chi(e)$. On the other \nolinebreak hand,
    	\begin{align*}
    	0 = \sum_{e\in P_1} \chi(e) - \sum_{e \in P_2} \chi(e) &= 
    	\sum_{uv \in \{ab,cd\}} \left( \chi(xuv) - \chi(yuv) \right) - 
    	\sum_{uv \in \{bc,ad\}} \left( \chi(xuv) - \chi(yuv) \right) \\ &= 
    	\sum_{uv \in \{ab,cd\}} f_{xy}(uv) - 
    	\sum_{uv \in \{bc,ad\}} f_{xy}(uv).
    	\end{align*}
    	So $f_{xy}(ab) + f_{xy}(cd) = f_{xy}(bc) + f_{xy}(ad)$. This shows that $f_{xy}$ has no unbalanced $C_4$, as claimed.
    	
    	By Lemma~\ref{lem:unbalanced C_4} (applied to $f_{xy}$), there is a set $S_{xy} \subseteq [n] \setminus \{x,y\}$ of size at most $8$ such that after deleting $S_{xy}$, the colouring $f_{xy}$ satisfies one of the following:
    	\begin{enumerate}
    		\item All edges have colour 0.
    		\item There is a vertex-partition $A_{xy} \cup B_{xy}$ such that all edges inside $A_{xy}$ have colour $1$, all edges inside $B_{xy}$ have colour $-1$, and all edges between $A_{xy}$ and $B_{xy}$ have colour $0$.
    	\end{enumerate}
    	The pair $xy$ is called {\em even} if Item 1 holds and {\em odd} if Item 2 holds. Note that $xy$ is even (resp.~odd) if and only if $yx$ is even (resp.~odd). Also, if $xy$ is odd, we may assume that $A_{yx} = B_{xy}$ and $B_{yx} = A_{xy}$. 
    	
    	\begin{claim}\label{claim:parity}
    		Let $x,y,z \in [n]$ be distinct vertices. If $xy$ and $yz$ have the same parity (i.e., both are even or both are odd) then $xz$ is even. If $xy$ and $yz$ have different parity then $xz$ is odd. 
    	\end{claim}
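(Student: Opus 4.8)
The claim is about how the parity of pairs "composes": parity behaves like $\bZ/2\bZ$-addition, where even $= 0$ and odd $= 1$. The natural strategy is to pick a suitable auxiliary vertex $w$ and examine the colour $f_{xy}(wu)$ for generic $u$, relating it to the roles of $x, y$ in the partition structure of a third colouring. More directly, I would work with the defining identity $f_{xy}(uv) = \chi(xuv) - \chi(yuv)$ and the cocycle-type relation $f_{xz} = f_{xy} + f_{yz}$, which holds pointwise: for any $u, v \notin \{x, y, z\}$,
\[
f_{xz}(uv) = \chi(xuv) - \chi(zuv) = \big(\chi(xuv) - \chi(yuv)\big) + \big(\chi(yuv) - \chi(zuv)\big) = f_{xy}(uv) + f_{yz}(uv).
\]
So on the common ground set $[n] \setminus \{x,y,z\}$ (minus a bounded exceptional set), $f_{xz}$ is the sum of $f_{xy}$ and $f_{yz}$, each of which has one of the two structured forms from Lemma~\ref{lem:unbalanced C_4}.

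**Key steps.** First I would fix $x, y, z$ distinct and restrict attention to $V' := [n] \setminus (\{x,y,z\} \cup S_{xy} \cup S_{yz} \cup S_{xz})$, which still has $\ge n - 11$ vertices, so it is large. On $\binom{V'}{2}$, each of $f_{xy}, f_{yz}, f_{xz}$ has one of the two clean structures (all-zero, or a $\pm 1$/$0$ partition). Second, I would enumerate cases on the parities of $xy$ and $yz$ and in each case read off, via the identity $f_{xz} = f_{xy} + f_{yz}$ on $\binom{V'}{2}$, what structure $f_{xz}$ is forced to have. If both are even: $f_{xy} \equiv 0$ and $f_{yz} \equiv 0$ on $\binom{V'}{2}$, so $f_{xz} \equiv 0$ there; since a structured colouring of the large clique $\binom{V'}{2}$ that is identically $0$ cannot be of "odd" type (an odd colouring on $\ge 3$ vertices always has a $\pm 1$ edge, as both parts being empty would make it the all-zero colouring but then it is classified as even—here I need the classification to be essentially unique on a large clique), $xz$ must be even. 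If exactly one is odd, say $xy$ odd with partition $A \cup B$ and $yz$ even: then on $\binom{V'}{2}$, $f_{xz} = f_{xy}$, which has a $\pm 1$ partition, hence $xz$ is odd. If both are odd, with partitions $A_{xy} \cup B_{xy}$ and $A_{yz} \cup B_{yz}$: then $f_{xz} = f_{xy} + f_{yz}$; I would argue that the two partitions must be "compatible" — on the four cells $A_{xy} \cap A_{yz}$, $A_{xy} \cap B_{yz}$, etc., the sum takes values in $\{0, \pm 2\}$, and for $f_{xz}$ to be $\{0, \pm 1\}$-valued the cells where the sum is $\pm 2$ must be empty or negligible. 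Since $f_{xz}$ is itself structured on the large clique, this forces (up to bounded exceptions) either $A_{xy} = A_{yz}$ (giving $f_{xz}$ a $\pm 2$-scaled... no — rather the mismatch cells vanish), in which case $f_{xz} \equiv 0$ on the bulk, i.e. $xz$ even. The crux is ruling out the "inconsistent" configuration by using that $f_{xz}$, being classifiable by Lemma~\ref{lem:unbalanced C_4} on a large vertex set, cannot be the messy mixture.

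**Main obstacle.** The genuinely delicate point is that Lemma~\ref{lem:unbalanced C_4} only gives structure after deleting up to $8$ vertices, and the classification "even vs.\ odd" is not literally a property of $f_{xy}$ but of a cleaned-up version. So when I say "$f_{xz} \equiv 0$ on $\binom{V'}{2}$ forces $xz$ even", I must ensure this is consistent: I should argue that if $f_{xy}$ restricted to a clique on $\ge n - O(1)$ vertices is identically $0$, then the structure guaranteed by Lemma~\ref{lem:unbalanced C_4} for $f_{xy}$ on $[n] \setminus \{x,y\}$ must be Item~1 (since an odd structure, even after deleting $8$ vertices, would leave a large monochromatic-$1$ clique on $A_{xy}$, contradicting that $f_{xy}$ is $0$ on the huge clique $\binom{V'}{2}$ — here I use that $|A_{xy}|$ or $|B_{xy}|$ must be large). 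Symmetrically, if $f_{xz}$ is identically $0$ on a large clique it cannot be odd, and if it has a large $\pm 1$ partition it cannot be even. Pinning down these "uniqueness of type on a large clique" statements — and carefully handling the both-odd case where the two partitions interact — is where I would spend the most care; the rest is bookkeeping over a handful of cases.
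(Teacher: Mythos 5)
Your proposal is correct and is essentially the paper's own proof: both use the pointwise cocycle identity $f_{xz}(uv) = f_{xy}(uv) + f_{yz}(uv)$ on $[n]\setminus\{x,y,z\}$ (after discarding the bounded exceptional sets) and then split into cases on the parities of $xy$ and $yz$, using that the dichotomy of Lemma~\ref{lem:unbalanced C_4} is exclusive on a large vertex set. The one place where you hesitate---the both-odd case---is resolved more cleanly by the paper than via a compatibility argument over the four cells: one simply picks, by pigeonhole, any one of $A_{xy}\cap A_{yz}$, $A_{xy}\cap B_{yz}$, $B_{xy}\cap A_{yz}$, $B_{xy}\cap B_{yz}$ of size at least $(n-17)/4$, observes that for $u,v$ in that cell $f_{xz}(uv)\equiv 0\pmod 2$ and hence $f_{xz}(uv)=0$ (since $f_{xz}$ is $\{0,\pm 1\}$-valued by definition), and then notes that such a large $0$-coloured clique is incompatible with the odd structure, whose $0$-coloured edges form a bipartite graph; so $xz$ must be even, with no need to argue that the ``mismatch'' cells are negligible.
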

    	\begin{claimproof}[Proof of Claim~\ref{claim:parity}]
    		Note that restricted to $\binom{[n] \setminus \{x,y,z\}}{2}$, we have 
    		$f_{xz} = f_{xy} + f_{yz}$. Indeed, for every distinct
    		$u,v \in [n] \setminus \nolinebreak \{x,y,z\}$, it holds that 
    		$$
    		f_{xz}(uv) = \chi(xuv) - \chi(zuv) = 
    		\left( \chi(xuv) - \chi(yuv) \right) + \left( \chi(yuv) - \chi(zuv) \right) = f_{xy}(uv) + f_{yz}(uv).
    		$$ 
    		
    		Delete the (at most 17) vertices in $S_{xy} \cup S_{yz} \cup \{y\}$. 
    		Suppose first that $xy,yz$ are even. Then $f_{xy}(uv) = f_{yz}(uv) = 0$ for any two remaining vertices $u,v$ and $f_{xz}(uv) = f_{xy}(uv) + f_{yz}(uv)=0$. This means that $f_{xz} = f_{xy} + f_{yz}$ cannot satisfy Item 2, so it must satisfy Item 1 and hence $xz$ is even.
    		Suppose now that $xy,yz$ have different parity, say $xy$ is odd and $yz$ is even. Then for every $u,v \in A_{xy}$ or $u,v\in B_{xy}$, we have $f_{xz}(uv) = f_{xy}(uv) + f_{yz}(uv) \in \{\pm 1\}$. This means that $f_{xz}$ cannot satisfy Item 1, so it must satisfy Item 2 and $xz$ is odd. Finally, suppose that $xy,yz$ are both odd. One of the four sets $A_{xy} \cap A_{yz}, A_{xy} \cap \nolinebreak B_{yz}, B_{xy} \cap \nolinebreak A_{yz}, B_{xy} \cap \nolinebreak B_{yz}$ has size at least $\frac{n-17}{4}$. For any two $u,v$ in this set, $f_{xy}(uv),f_{yz}(uv) \in \{\pm 1\}$ and thus $f_{xz}(uv) \equiv 0 \pmod 2$. This means that $xz$ cannot satisfy Item 2 (because in that case the edges with colour 0 form a bipartite graph), so $xz$ is even. 
    	\end{claimproof}
       
        Claim~\ref{claim:parity} implies that the even pairs induce disjoint cliques (since if $xy$ and $yz$ are even, then $xz$ is even) and that the number of disjoint even cliques is at most $2$ (since if $xy$ and $yz$ are odd, then $xz$ is even).
        In other words, there is a partition of $[n]$ into $X \cup Y$ such that every pair of vertices which is fully contained in $X$ or fully contained in $Y$ is even, and every other pair is odd.
        We will show that either $X \cup Y$ or $[n] \cup \emptyset$ is the desired partition.
        
        We start by proving that almost all edges of any given type have the same colour. There are four edge types: $XXX$ (i.e., edges entirely contained in $X$); $XXY$ (i.e., edges with two vertices in $X$ and one in $Y$); and similarly $YYX$ and $YYY$. 

        \begin{claim}
        \label{claim:colour_edge_type}
            There exist colours $c_1,c_2,c_3,c_4 \in \{0,1\}$ such that all but at most $10^8|X|^2$ edges of type $XXX$ have colour $c_1$, all but at most $10^8|X|n$ edges of type $XXY$ have colour $c_2$, all but at most $10^8|Y|n$ edges of type $XYY$ have colour $c_3$, and all but at most $10^8|Y|^2$ edges of type $YYY$ have colour $c_4$.
        \end{claim}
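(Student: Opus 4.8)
The plan is to apply Lemma~\ref{lem:multicolor tight pairs new} to each of the four edge types, feeding it the following structural consequence of the partition $[n]=X\cup Y$ obtained above. Every pair $\{p,q\}$ with $\{p,q\}\subseteq X$ or $\{p,q\}\subseteq Y$ is \emph{even}, so by Lemma~\ref{lem:unbalanced C_4} applied to $f_{pq}$ there is a set $S_{pq}$ with $|S_{pq}|\le 8$ such that $f_{pq}(uv)=0$ for all distinct $u,v\notin\{p,q\}\cup S_{pq}$; equivalently, $\chi(puv)=\chi(quv)$ whenever $\{u,v\}\cap S_{pq}=\emptyset$. In other words, a colour change along a ``tight pair'' $e=\{p,u,v\}$, $e'=\{q,u,v\}$ whose symmetric difference $\{p,q\}$ is contained in a single part of $X,Y$ forces $\{u,v\}$ to meet the constant-size set $S_{pq}$; this scarcity of colour-changing tight pairs is what makes each edge type almost monochromatic.

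Fix one of the four types $T$ and let $m$ be the size of the smaller colour class among edges of type $T$. If one of the parts occurring in $T$ has fewer than $360$ vertices, then the total number of edges of type $T$ is already below the stated bound: for instance $|X|<360$ gives at most $\binom{|X|}{3}\le 60|X|^2$ edges of type $XXX$ and at most $\binom{|X|}{2}|Y|\le 180|X|n$ of type $XXY$ (and symmetrically in the remaining cases, using $|X|,|Y|\le n$ throughout), so any choice of the corresponding colour $c_i$ works. Otherwise every part occurring in $T$ has size at least $360$, and we invoke Lemma~\ref{lem:multicolor tight pairs new} with $(A,B,C)$ equal to $(X,X,X)$, $(X,X,Y)$, $(Y,Y,X)$ or $(Y,Y,Y)$ according to $T$ — in each case any two of $A,B,C$ are disjoint or equal, so the hypothesis is met. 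This yields a set $Z\in\{A,B,C\}$ together with at least $m|Z|/900$ pairs $e,e'$ of type $T$ satisfying $\chi(e)\ne\chi(e')$, $|e\cap e'|=2$ and $e\bigtriangleup e'\subseteq Z$.

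The point is that $e\bigtriangleup e'$ is necessarily an even pair: it has exactly two vertices and is contained in $Z\in\{X,Y\}$. (The only situation needing a word of explanation is $T=XXY$ with $Z=Y$: since each edge of type $XXY$ has precisely one vertex in $Y$, the inclusion $e\bigtriangleup e'\subseteq Y$ forces these two vertices to be the $Y$-vertices of $e$ and of $e'$, so again $e\bigtriangleup e'$ lies inside $Y$.) Writing $e\bigtriangleup e'=\{p,q\}$ and $e\cap e'=\{u,v\}$, the condition $\chi(e)\ne\chi(e')$ is precisely $f_{pq}(uv)\ne 0$, hence $\{u,v\}$ meets $S_{pq}$. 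Counting accordingly: the even pair $\{p,q\}$ has at most $\binom{|X|}{2}$ or $\binom{|Y|}{2}$ choices, and for each of them one coordinate of $\{u,v\}$ lies in $S_{pq}$ (at most $8$ choices) while the other is free in the appropriate part (at most $n$ choices, or at most $|X|$ resp.\ $|Y|$ when both coordinates of $\{u,v\}$ are confined to a single part). The number of pairs delivered by the lemma is therefore at most $4|X|^3$ ($T=XXX$), at most $4|X|^2n$ or $4|X||Y|^2$ ($T=XXY$, according to $Z=X$ or $Z=Y$), at most $4|Y|^2n$ or $4|X|^2|Y|$ ($T=XYY$), and at most $4|Y|^3$ ($T=YYY$). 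Dividing by $|Z|/900$ and using $|X|,|Y|\le n$ gives in every case $m\le 3600|X|^2$ (type $XXX$), $m\le 3600|X|n$ (type $XXY$), $m\le 3600|Y|n$ (type $XYY$), $m\le 3600|Y|^2$ (type $YYY$), well within the claimed $10^8$-bounds; we then take $c_i$ to be the majority colour of edges of type $T$.

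The only genuine difficulty here is bookkeeping: one must go through the four types and, for $T\in\{XXY,XYY\}$, both outcomes for the set $Z$ returned by Lemma~\ref{lem:multicolor tight pairs new}, each time verifying that $e\bigtriangleup e'$ is a pair internal to one of $X,Y$ (so that the even-pair mechanism applies) and that the crude count above, once divided by $|Z|/900$, has the right order of magnitude ($|X|^2$, $|X|n$, $|Y|n$ or $|Y|^2$). Conceptually a single idea is being reused throughout, namely that colour changes along tight pairs internal to $X$ or to $Y$ are confined to the $O(1)$ exceptional vertices of the relevant colouring $f_{pq}$.
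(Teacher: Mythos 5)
Your proposal is correct and uses essentially the same argument as the paper: for each of the four edge types you apply Lemma~\ref{lem:multicolor tight pairs new} with the appropriate choice of $A,B,C\in\{X,Y\}$, upper-bound the resulting tight colour-changing pairs $e,e'$ with $e\bigtriangleup e'$ inside one part via the evenness of pairs internal to $X$ or $Y$ (so that $e\cap e'$ must meet the exceptional set $S_{pq}$ of size at most $8$), and deduce that the minority colour class of that type is small. The only cosmetic difference from the paper is that you run the bound on the minority class size $m$ directly rather than fixing $m=10^8|X|^2$ (resp.\ $10^8|X|n$, etc.) and arguing by contradiction; the case analysis over the two possible values of $Z$ returned by the lemma for the mixed types, and the small-part boundary cases, are handled in the same way.
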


        \begin{claimproof}[Proof of Claim~\ref{claim:colour_edge_type}]
            We start with the edges of type $XXX$ and let $m:=10^8|X|^2$.
            First note that we can assume $|X| \ge 360$, as otherwise the total number of edges inside $X$ is smaller than $m$.
            We upper-bound the number of pairs of edges $e,e' \subseteq X$ with $\chi(e) \neq \chi(e')$ and $|e \cap e'| = 2$, as follows. Write $e:=xuv, e':=x'uv$. We first choose $x,x'$. Since $xx'$ is even, if $u,v \notin S_{xx'}$ then $f_{xx'}(uv) = 0$ and hence $\chi(xuv) = \chi(x'uv)$. Therefore we must have $u \in S_{xx'}$ or $v \in S_{xx'}$, and thus there are at most $8|X|$ choices of $uv$ (because $|S_{xx'}| \leq 8$). Hence, the total number of pairs of edges $e,e' \subseteq X$ with $\chi(e) \neq \chi(e')$ and $|e \cap e'| = 2$ is at most $\binom{|X|}{2} \cdot 8|X| < \frac{m|X|}{900}$.
            By Lemma~\ref{lem:multicolor tight pairs new} (applied on input $A=B=C=X$), there must be $c_1 \in \{0,1\}$ such that all but at most $m$ of the edges of type $XXX$ have colour $c_1$. 

            We now move to edges of type $XXY$ and let $m':=10^8|X|n$.
            As before, we can assume $|X|,|Y| \ge 360$, as otherwise the total number of edges of type $XXY$ is smaller than $m'$.
            We bound the number of pairs of edges $e,e'$ of type $XXY$ with $\chi(e) \neq \chi(e')$, $|e \cap e'| = 2$ and $e \bigtriangleup e' \subseteq Y$, as follows. 
            Write $e:=yuv, e':=y'uv$ with $u,v \in X$ and $y,y' \in Y$.
            We first choose $y,y'$. Since $yy'$ is even, in order to have $\chi(e) \neq \chi(e')$, we must have $u \in S_{yy'}$ or $v \in S_{yy'}$, and thus there are at most $8|X|$ choices of $uv$ (because $|S_{yy'}| \leq 8$). 
            Hence, the number of pairs of edges $e,e'$ of type $XXY$ with $\chi(e) \neq \chi(e')$, $|e \cap e'| = 2$ and $e \bigtriangleup e' \subseteq Y$ is at most $\binom{|Y|}{2} \cdot 8|X| < \frac{m'|Y|}{900}$.
            By the same argument, the number of pairs of edges $e,e'$ of type $XXY$ with $\chi(e) \neq \chi(e')$, $|e \cap e'| = 2$ and $e \bigtriangleup e' \subseteq X$ is at most $\binom{|X|}{2} \cdot 8n < \frac{m'|X|}{900}$.
            Therefore, by Lemma~\ref{lem:multicolor tight pairs new} (applied on input $A=B=X$ and $C=Y$), there must be $c_2 \in \{0,1\}$ such that all but at most $m'$ of the edges of type $XXY$ have colour $c_2$. 

            The two remaining types $YYX$ and $YYY$ are symmetric.
        \end{claimproof}

        If $|X| \le 10^{12}$, then the number of edges touching $X$ is at most $|X|n^2 \le 10^{12} n^2$. We may then recolour these $O(n^2)$ edges, as well as at most $O(n^2)$ other edges (using Claim~\ref{claim:colour_edge_type}), to obtain a colouring for which the assertion of the proposition holds (for the partition $X \cup Y$). Thus, we may assume that $|X| > 10^{12}$ and similarly $|Y| > 10^{12}$.

        An edge of type $XXX$ (resp.~$XXY$, $XYY$, $YYY$) is called {\em good} if it has colour $c_1$ (resp.~$c_2,c_3,c_4$). Otherwise, the edge is {\em bad}.
        Sample a set $S$ of six distinct vertices, three from $X$ and three from $Y$, uniformly at random.
        We claim that with positive probability, all $\binom{6}{3} 
        = 20$ edges inside $S$ are good.
        Indeed let $e$ be an edge of $K_n^{(3)}$ with $i$ vertices in $X$ and $3-i$ vertices in $Y$ (so $0 \le i \le 3$).
        The probability that $e \subseteq S$ is 
        \[
            \frac{\binom{|X|-i}{3-i}}{\binom{|X|}{3}} \cdot \frac{\binom{|Y|-(3-i)}{i}}{\binom{|Y|}{3}} \le 729 |X|^{-i} |Y|^{i-3}\, .
        \]
        Using Claim~\ref{claim:colour_edge_type}, the expected number of bad edges $e \subseteq S$ of type $XXX$ is at most $10^8 |X|^2 \cdot 729 |X|^{-3} < 1/4$ (where the inequality uses that $|X| \geq 10^{12}$). 
        By symmetry, the expected number of bad edges $e \subseteq S$ of type $YYY$ is less than $1/4$. 
        Similarly, the expected number of bad edges $e \subseteq S$ of type $XXY$ is at most 
        $10^8 |X| n \cdot 729 |X|^{-2}|Y|^{-1} < 1/4$ (using that, since $|X|,|Y| \ge 10^{12}$ and $n$ is large, we have $|X|\cdot |Y| \ge 10^{12}(n-10^{12}) \ge 10^{12} n/2$).
        By symmetry, the same holds for edges of type $XYY$. By linearity of expectation, the total number of bad edges $e \subseteq S$ is less than 1, and hence there is an outcome in which all edges in $S$ are good. 
                
        Fix a set $S$ for which this holds and write $S \cap X =:\{x_1,x_2,x_3\}$ and $S \cap Y=:\{y_1,y_2,y_3\}$.
        Let $\cP_1:=\{x_1x_2x_3, x_1y_2y_3, y_1x_2y_3, y_1y_2x_3\}$ and $\cP_2:=\{x_1x_2y_3,x_1y_2x_3,y_1x_2x_3,y_1y_2y_3\}$.
        Observe that $\cP_1$ and $\cP_2$ are two edge-disjoint Pasch configurations, $\cP_1$ has exactly one edge of colour $c_1$ and three edges of colour $c_3$, while $\cP_2$ has exactly one edge of colour $c_4$ and three edges of colour $c_2$.
        Moreover $\cP_1 \cup \cP_2$ is a copy of $K^{(3)}_{2,2,2}$ with parts $\{x_i,y_i\}$ for $i=1,2,3$.
        Observe that in order for this copy to not be a gadget, we must have $c_1=c_4$ and $c_2=c_3$.
        
        If $c_1 \neq c_2$, then the partition $X \cup Y$ satisfies the assertion of the proposition.
        Otherwise, $c_1=c_2=c_3=c_4=:c$, and all but at most $4 \cdot 10^8 n^2$ edges of $K_n^{(3)}$ have colour $c$, so we can take the partition $[n] \cup \emptyset$. 
    \end{proof}

\section{From many gadgets to high discrepancy}
\label{sec:gadgets_to_discrepancy}

In this section, we prove that if a colouring of $E(K_n^{(3)})$ contains many gadgets, then it admits a Steiner triple system with high discrepancy.

\begin{theorem}
	\label{thm:gadgets_to_discrepancy}
	For every $r \in \mathbb{N}$ with $r \ge 2$ and $\delta>0$, there exist $n_0 \in \mathbb{N}$ and $\mu >0$ such that for every $n \ge n_0$ with $n \equiv 1,3 \pmod 6$ the following holds.
	Consider an $r$-edge-colouring of $K_n^{(3)}$ and assume that there exists a collection of $\delta n^6$ gadgets.
	Then there is a Steiner triple system with at least $\frac{1}{r} \cdot \frac{1}{3} \binom{n}{2} + \mu n^2$ edges of the same colour.
\end{theorem}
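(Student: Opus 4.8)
The plan is to use the $\delta n^6$ gadgets to build a Steiner triple system whose colour profile we can push away from balanced, regardless of how the ``bulk'' of the STS is coloured. First I would pass from the given family $\cK$ of $\delta n^6$ gadgets to a sub-family $\cI$ that is well-spread: I want the shadows of the gadgets in $\cI$ to be pairwise edge-disjoint (so that distinct gadgets do not interfere in the $2$-uniform graph we will decompose) and every vertex of $[n]$ to lie in at most, say, $n/100$ gadgets of $\cI$. This I would achieve by including each gadget of $\cK$ independently with probability $p = c/n^4$ for a suitable small constant $c$: the expected number of surviving gadgets is $\Theta(n^2)$, the expected number of ``bad'' intersecting pairs (two gadgets sharing a shadow edge) is $O(p^2 n^{10}) = O(n^2)$ — note that a shadow pair $\{u,v\}$ lies in $O(n^4)$ gadgets, so the number of intersecting pairs of gadgets in $\cK$ is $O(n^{10})$ — and likewise the contribution to each vertex's gadget-degree is controlled in expectation with concentration from the Chernoff bound in Lemma~\ref{lem:chernoff}. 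Deleting one gadget from each bad pair leaves $\cI$ with $|\cI| = \Theta(n^2)$ gadgets, pairwise shadow-edge-disjoint and with bounded vertex multiplicity. (A minor point: one must ensure the number of gadgets removed by the union bound over the Chernoff tail events is a lower-order term, which holds for $n$ large.)

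Next I would form the $2$-uniform graph $G$ on vertex set $[n]$ by deleting from $K_n$ all edges lying in the shadow of some gadget in $\cI$. Since each vertex lies in at most $n/100$ gadgets, and each gadget of $K^{(3)}_{2,2,2}$ contributes at most $4$ shadow edges at any fixed vertex (its link in $K_{2,2,2}$ has degree $\le 4$), every vertex loses at most $n/25$ edges, so $\delta(G) \ge n - n/25 - 1 \ge 0.91 n$. I also need $G$ to be $K_3$-divisible: this is where I would be slightly careful. After deleting the $\Theta(n^2)$ shadow edges, degrees may have the wrong parity and $|E(G)|$ the wrong residue mod $3$; but because the gadgets are shadow-disjoint and there are only $O(n^2)$ of them, I can afford to delete or add back a bounded number of further edges per vertex to repair divisibility while keeping $\delta(G) \ge 0.91 n$ and keeping all shadow edges of $\cI$ still absent from $G$ — alternatively, and more cleanly, I would choose $n$ to guarantee $n \equiv 1,3 \pmod 6$ and argue that a standard parity/divisibility-correction argument applies (replacing at most $O(n)$ further edges). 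By Theorem~\ref{thm:minimum_degree_triangle_decomposition}, $G$ then has a triangle decomposition $\cT$. Viewing each triangle as an edge of $K_n^{(3)}$, and adding for each gadget $K \in \cI$ one of its two Pasch configurations, we obtain, for either choice of Pasch configuration per gadget, a hypergraph whose edges are pairwise intersecting in at most one vertex (linearity) and whose shadow is exactly $E(K_n)$: the triangles cover $E(G)$, and each gadget's chosen Pasch configuration covers precisely the shadow $K_{2,2,2}$ edges that were removed for that gadget, with no overlaps since the shadows are edge-disjoint. Hence this is a Steiner triple system.

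Now I would count colours. Let $N := \frac{1}{3}\binom{n}{2}$ be the number of triples in an STS. The triangles in $\cT$ and the ``frame'' of the $|\cI|$ gadgets are fixed; only the Pasch choices vary. Let $n_c$ denote the number of triples of colour $c$ coming from $\cT$ together with a fixed default Pasch choice. If some colour $c$ already has $n_c \ge N/r + \mu n^2$ we are done. Otherwise every colour appears on at most $N/r + \mu n^2$ of these triples, so in particular the most popular colour, call it $c^\star$, appears on at least $N/r - (r-1)\mu n^2$ of them. For each gadget $K \in \cI$, by definition there is a colour $c(K)$ and a choice of Pasch configuration that increases the count of $c(K)$ by at least $1$ relative to the other; by pigeonhole, for at least $|\cI|/r = \Omega(n^2)$ of the gadgets this colour $c(K)$ is the \emph{same} colour, say $c_0$. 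Switching each of those gadgets to favour $c_0$ increases the count of $c_0$ by $\Omega(n^2)$ while changing every other colour count by at most $|\cI| = \Omega(n^2)$ in total (but by a smaller, controllable multiple if I am careful about which direction each swap moves each colour). Choosing $\mu$ small enough relative to $\delta$ and $r$, the resulting count of $c_0$ exceeds $N/r + \mu n^2$. The main obstacle I anticipate is precisely this last bookkeeping: a single gadget swap can change the counts of several colours simultaneously, so one must argue — via pigeonhole on the pair (colour, direction) and by processing the $\Omega(n^2)$ ``aligned'' gadgets together — that the net effect on the target colour is a genuine $+\Omega(n^2)$ rather than being cancelled; the divisibility repair of $G$ is a secondary but real technical nuisance. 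With $\mu \ll \delta, 1/r$ fixed at the start, all these linear error terms are absorbed and the theorem follows.
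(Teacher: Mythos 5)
Your overall strategy — sample gadgets with probability $\Theta(n^{-4})$, thin down to a shadow-edge-disjoint subfamily $\cI$ with bounded vertex multiplicity, triangle-decompose the complementary graph $G$, and choose Pasch configurations to bias a colour — is the same as the paper's. However, there are two genuine gaps.

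First, the colour bookkeeping at the end, which you yourself flag as the main obstacle, is not actually resolved by the pigeonhole scheme you sketch. After you pigeonhole to a colour $c_0$ with $|\cI|/r$ ``aligned'' gadgets, flipping them to favour $c_0$ gains nothing if the default Pasch choice already favoured $c_0$; and even when there is a gain of $\Omega(n^2)$, you have no control on the starting value of $n_{c_0}$, which could be far below $N/r$, so landing above $N/r+\mu n^2$ does not follow. The paper avoids this with a cleaner averaging argument: for each gadget $I$ and each colour $c$, set $I_c$ to be the larger number of colour-$c$ edges among the two Pasch configurations of $I$, observe that $I_c \ge \tfrac{a_c+b_c}{2}$ with strict inequality for the witnessing colour of the gadget, hence $\sum_{c\in[r]} I_c > 4$, so $\sum_c I_c \ge 5$ by integrality. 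Then for each colour $c$ separately one builds an STS by taking the $c$-favouring Pasch in every gadget, getting $S_c = T_c + \sum_{I\in\cI} I_c$, and $\sum_c S_c \ge |\cT| + 5|\cI| = \tfrac13\binom{n}{2} + |\cI|$; averaging over $c$ yields discrepancy $\Omega(|\cI|) = \Omega(n^2)$. Your version can probably be patched (compare the extreme values of the $c_0$-count over the aligned gadgets, and if the upper extreme is still small pass to the complementary colours), but as written the case analysis is missing.

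Second, and more seriously, the divisibility ``repair'' you propose is both unnecessary and would actually break the construction. It is unnecessary because $G$ is automatically $K_3$-divisible: $\binom{n}{2}\equiv 0\pmod 3$ for $n\equiv 1,3 \pmod 6$ and removing $12|\cI|$ edges (a multiple of $3$) preserves this; and $n-1$ is even while each gadget shadow contributes exactly $4$ edges at each incident vertex, disjointly across gadgets since shadows are edge-disjoint, so every degree decreases by a multiple of $4$ and stays even. It would break the construction because the final STS is $\cT$ together with one Pasch per gadget, whose shadows must partition $E(K_n)$ exactly: any non-shadow edge you delete from $G$ to fix divisibility is covered by no triple of the final system, and any shadow edge you add back is covered twice. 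You cannot perform ad hoc edge edits and keep the invariant that $E(K_n)\setminus E(G)$ equals the union of gadget shadows; the automatic divisibility is essential, not a convenience.
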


\begin{proof}
	Let $\eps:=10^{-5}$ and assume $n$ is sufficiently large with $n \equiv 1,3 \pmod 6$. Let $\cK$ denote the collection of at least $\delta n^6$ gadgets.
	Recall that the {\em shadow} of a 3-uniform hypergraph $P$ is the (2-uniform) graph consisting of all pairs which are contained in some edge of $P$. Also, two graphs are {\em edge-disjoint} if they share no edges. 
	We refine $\cK$ to a subcollection containing quadratically many gadgets, which do not use the same vertex too much, and whose shadows are pairwise edge-disjoint.
	We will do so by sampling the gadgets with an appropriate probability. 
	
	We first show that, for $p:=\eps n^{-4}$, there exists $\cJ \subseteq \cK$ such that the following hold:
	\begin{enumerate}[label=\upshape(P\arabic*)]
		\item \label{item_P1} $|\cJ| \ge |\cK|p/2$;
		\item \label{item_P2} denoting by $\cY$ the set of pairs of gadgets of $\cJ$ whose shadows are not edge-disjoint, we have $|\cY| < |\cK|p/100$;
		\item \label{item_P3} every vertex of $K_n^{(3)}$ is contained in at most $7\eps n$ gadgets of $\cJ$.
	\end{enumerate} 
	Let $\cJ$ be obtained from $\cK$ by sampling each gadget with probability $p$, independently. We show that with positive probability all the properties~\ref{item_P1},~\ref{item_P2} and~\ref{item_P3} hold.
	First, note that $\EE[|\cJ|] = |\cK|\cdot p \ge \delta \eps n^2$ and thus, by the Chernoff bound (Lemma~\ref{lem:chernoff}), we have $|\cJ| \ge |\cK|p/2$ with probability at least $0.9$.
	Next, observe that there are at most $n^4$ copies of $K_{2,2,2}^{(3)}$ in $K_n^{(3)}$ whose shadow contains a given ($2$-uniform) edge, and that the shadow of $K_{2,2,2}^{(3)}$ contains $12$ edges.
	Therefore, given a gadget $K$, there are at most $12n^4$ other gadgets whose shadow shares an edge with $K$.
	It follows that $\EE[|\cY|] \le |\cK| \cdot (12 n^4) \cdot p^2 \le 12|\cK|p \eps $ and, by Markov's inequality, $\PP[|\cY| \ge |\cK|p/100] \le 1200\eps \le  0.1$.
	Finally, given $v \in V(K_n^{(3)})$, let $X_v$ be the collection of gadgets of $\cJ$ containing the vertex $v$.
	Then $\EE[|X_v|] \le n^5 \cdot p = \eps n$, using that there are at most $n^5$ copies of $K_{2,2,2}^{(3)}$ in $K_n^{(3)}$ containing~$v$.
	Then, using the second part of Lemma~\ref{lem:chernoff}, we have $\PP[|X_v| > 7 \eps n] \le \exp(-7\eps n)$, and by a union bound over the $n$ vertices, with probability at least $0.9$, we have $|X_v| \le 7 \eps n$ for every $v \in V(K_n^{(3)})$.
	Overall, this proves the existence of a suitable $\cJ \subseteq \cK$.
	
	Let $\cJ \subseteq \cK$ satisfy the properties~\ref{item_P1}--\ref{item_P3} and define $\cI$ by removing from $\cJ$ all gadgets appearing in a pair in $\cY$.
	Then $|\cI| \ge |\cK|p/3 \ge \delta \eps n^2/3$, the shadows of the gadgets in $\cI$ are pairwise edge-disjoint and every vertex of $K_n^{(3)}$ is contained in at most $7 \eps n$ gadgets of $\cI$.
	
	Let $G$ be the $n$-vertex (2-uniform) graph obtained from $K_n$ by removing the edges which are in the shadow of a gadget in $\cI$.
    Recall that $n \equiv 1,3 \pmod 6$ and thus $K_n$ is $K_3$-divisible, i.e.~the number of its edges is divisible by $3$ and all its vertices have even degree.
	Since we remove $12$ ($2$-uniform) edges per gadget, $e(G)$ is still divisible by 3. 
    Moreover, for each gadget and each vertex $v$ contained in the gadget, we remove exactly $4$ edges adjacent to $v$ and, since the shadows of the gadgets in $\cI$ are pairwise edge-disjoint, the removed edges adjacent to $v$ are distinct for distinct gadgets. 
	Therefore $G$ is still $K_3$-divisible and $\delta(G) \ge n - 1 - 4 \cdot 7\varepsilon n \ge 0.91 n$.
	It follows from Theorem~\ref{thm:minimum_degree_triangle_decomposition} that $G$ has a triangle decomposition $\mathcal{T}$.
	Observe that the number of triples in $\mathcal{T}$  is exactly
	\begin{equation}
		\label{eq:triangle_decomposition}
		|\mathcal{T}| = \frac{\binom{n}{2} - 12|\cI|}{3} \, .
	\end{equation} 
	
	We now show that by choosing the appropriate Pasch configuration in each gadget of $\cI$, we get an STS with high discrepancy. Note that each Pasch configuration in a $K_{2,2,2}^{(3)}$ forms a triangle decomposition of the shadow of the $K_{2,2,2}^{(3)}$. Hence, choosing a Pasch configuration for each gadget completes the triangle-decomposition $\mathcal{T}$ of $G$ to an STS.

    Let $[r]$ be the colour-set of $E(K_n^{(3)})$.
    For each gadget $I \in \cI$ and colour $c\in [r]$, let $I_c$ be the largest number of edges in colour $c$ among the two Pasch configurations of~$I$. We claim that  
    \begin{equation}\label{eq:I_c}
    \sum_{c \in [r]} I_c \ge 5.
    \end{equation}
    Indeed, let $P_1,P_2$ be the two Pasch configurations in $I$, and let $a_c$ (resp.~$b_c$) be the number of edges of colour $c$ in $P_1$ (resp.~$P_2$). Then $I_c = \max(a_c,b_c)$, so $I_c \geq \frac{a_c + b_c}{2}$. Also, since $I$ is a gadget, there is a colour $c^*$ with $a_{c^*} \neq b_{c^*}$, and so $I_{c^*} > \frac{a_{c^*} + b_{c^*}}{2}$. It follows that $\sum_{c \in [r]}I_c > \frac{1}{2}\sum_{c \in [r]}(a_c + b_c) = \frac{1}{2}(e(P_1) + e(P_2)) = 4$, implying \eqref{eq:I_c}.
    
	For each colour $c \in [r]$, let $T_c$ be the number of triples of colour $c$ in $\mathcal{T}$.
	By choosing for each $I \in \cI$ the Pasch configuration with $I_c$ edges of colour $c$, we get a Steiner triple system with $S_c := T_c+ \sum_{I \in \cI} I_c$ edges in colour $c$. We have
	\begin{equation*}
		\sum_{c \in [r]} S_c = \sum_{c \in [r]} T_c + \sum_{c \in [r]} \sum_{I \in \cI} I_c
		\ge \frac{\binom{n}{2} - 12|\cI|}{3} + 5 \cdot |\cI| = \frac{1}{3}\binom{n}{2}+|\cI| \geq 
		\frac{1}{3}\binom{n}{2} + \frac{\delta \varepsilon n^2}{3},
	\end{equation*}
	where the first inequality uses~\eqref{eq:triangle_decomposition} and~\eqref{eq:I_c}.
	By averaging, there is a colour $c \in [r]$ such \nolinebreak that 
	\[
	S_c \ge \frac{1}{r}\cdot \left[ \frac{1}{3} \binom{n}{2} + \frac{\delta \eps n^2}{3} \right]\, ,
	\]
	which proves the theorem with $\mu:=(\delta\varepsilon)/(3r)$. 
\end{proof}

\section{Edge-colourings with more than two colours}
\label{sec:more_colours}
	Here we use the structure theorem (Theorem~\ref{thm:gadgets}) to show that for $r \geq 3$, if an $r$-edge-colouring of $K_n^{(3)}$ does not have many gadgets, then all but at most two colour classes are tiny.

	\begin{theorem}
	\label{thm:gadgets_3_colours}
		Let $r \in \mathbb{N}$ with $r \ge 3$ and $\zeta >0$.
		Then there exists $\delta>0$ such that for every $r$-edge-colouring of $K_n^{(3)}$, (at least) one of the following properties holds:
		\begin{enumerate}[label=\rm{(\arabic*)}]
            \item \label{item:few_edges_colour_c} There are two colours $c^*,d^*$ such that at most $\zeta n^3$ edges have colour different from $c^*,d^*$;
			\item \label{item:few_gadgets} There are at least $\delta n^6$ gadgets.
		\end{enumerate}
	\end{theorem}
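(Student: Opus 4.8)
The plan is to reduce the $r$-colour case to the $2$-colour structure theorem (Theorem~\ref{thm:gadgets}) by considering the $2$-colourings obtained from grouping colours, and then to compare the resulting partitions. Concretely, fix a small $\alpha \ll \zeta$ (to be chosen depending on $r$) and let $\delta$ be the minimum of the constants produced by Theorem~\ref{thm:gadgets} over the relevant applications. For each colour $i \in [r]$, let $\chi_i$ denote the $2$-colouring of $K_n^{(3)}$ in which the edges of colour $i$ form one class and all edges of the other $r-1$ colours form the second class. As noted in the proof overview, any gadget for $\chi_i$ is also a gadget for the original $r$-colouring, so if any $\chi_i$ has $\delta n^6$ gadgets we are in case~\ref{item:few_gadgets} and done. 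Hence we may assume that each $\chi_i$ falls into case~(2) of Theorem~\ref{thm:gadgets}: there is a partition $[n] = X_i \cup Y_i$ such that, after recolouring at most $\alpha n^3$ edges, all edges inside $X_i$ and inside $Y_i$ are one colour and all crossing edges the other. By symmetry of the two classes we may take the convention that colour $i$ is the ``crossing'' colour, i.e.\ up to $\alpha n^3$ exceptions, the edges of colour $i$ are exactly those meeting both $X_i$ and $Y_i$ (the other possibility, that colour $i$ is the ``inside'' colour, will be handled by noting it forces $\min(|X_i|,|Y_i|)$ to be small or colour $i$ to have $\geq (1-o(1))\binom{n}{3}$ edges, which we can absorb into the bookkeeping).

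First I would record the basic counting estimate: if colour $i$ has at least $\zeta n^3$ edges, then in the partition $[n]=X_i \cup Y_i$ we must have $|X_i|,|Y_i| \geq c n$ for some $c = c(\zeta) > 0$, since the number of crossing edges (plus $\alpha n^3$) must be $\geq \zeta n^3$, and the number of crossing triples is $O(\min(|X_i|,|Y_i|) \cdot n^2)$. The heart of the argument is then to show that at most two colours can be ``large'' (have $\geq \zeta n^3$ edges). Suppose for contradiction that three colours $i,j,k$ are all large, with corresponding balanced-ish partitions $(X_i,Y_i)$, $(X_j,Y_j)$, $(X_k,Y_k)$. The key structural tension is that colour $i$ edges are (essentially) the crossing edges of $(X_i,Y_i)$, colour $j$ edges are the crossing edges of $(X_j,Y_j)$, etc., and these must be disjoint edge sets (up to $O(\alpha n^3)$ errors, since a triple has one colour). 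I would look at the four cells $X_i \cap X_j$, $X_i \cap Y_j$, $Y_i \cap X_j$, $Y_i \cap Y_j$; at least two of these, say $P$ and $Q$, are ``large'' (linear size) and lie in different parts of both partitions, or one can argue that one of the partitions is an $o(1)$-refinement of the other. The clean way to phrase this: since crossing edges of $i$ and of $j$ are almost disjoint, a typical triple with vertices in three distinct nonempty cells would be crossing for both $(X_i,Y_i)$ and $(X_j,Y_j)$ — giving an overlap of $\Omega(n^3)$ unless the cell structure is degenerate — forcing $(X_j,Y_j)$ to essentially coincide with $(X_i,Y_i)$ (i.e.\ $|X_i \triangle X_j| = o(n)$ or $|X_i \triangle Y_j| = o(n)$). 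But then colours $i$ and $j$ have almost the same edge set, contradicting that they are disjoint and both of size $\geq \zeta n^3$. Thus at most one large colour can exist among $\{i,j\}$ with a genuinely balanced partition, and pushing the same comparison across all pairs shows at most two colours are large. Setting $c^*, d^*$ to be these (at most) two large colours, every other colour has fewer than $\zeta n^3$ edges, which is exactly case~\ref{item:few_edges_colour_c}.

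I expect the main obstacle to be making the ``comparison of partitions'' step fully rigorous: controlling the $O(\alpha n^3)$ error terms uniformly, and carefully handling the degenerate possibilities — a colour $i$ being an ``inside'' colour rather than a ``crossing'' colour, a partition being extremely unbalanced (one side sublinear), or two partitions $(X_i,Y_i)$, $(X_j,Y_j)$ refining each other into three or four linear cells. In the refinement case one should check directly that either a third colour is squeezed to $o(n^3)$ edges automatically, or that the common refinement already exhibits $\Omega(n^3)$ triples that are crossing for two distinct colours, contradicting disjointness. A convenient device is to choose the hierarchy $\alpha \ll \zeta \ll 1/r$ and to phrase everything in terms of ``for all but $O(\sqrt{\alpha})\, n$ vertices'' statements about the cells, so that the triple-counting inequalities have slack. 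Once the partition-comparison lemma is in place, the deduction that at most two colours survive — and hence the statement of Theorem~\ref{thm:gadgets_3_colours} — is a short pigeonhole-style argument.
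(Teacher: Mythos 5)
Your overall strategy matches the paper's: reduce to the $2$-colour case by forming, for each colour $c$, the merged colouring $\chi_c$, apply Theorem~\ref{thm:gadgets} to each, and then compare the resulting partitions to argue that at most two colours can be large. The partition-comparison lemma you sketch (if two ``crossing'' colours both have balanced partitions, their crossing edge sets overlap in $\Omega(n^3)$ triples unless the partitions nearly coincide, which is also contradictory) is essentially correct and close in spirit to the paper's Claim~\ref{claim:partitions_are_related}.

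However, there is a genuine gap in your reduction to the ``all crossing'' case. You claim that if colour $i$ is the \emph{inside} colour for its partition $(X_i,Y_i)$, then either $\min(|X_i|,|Y_i|)$ is small or colour $i$ occupies $(1-o(1))\binom{n}{3}$ edges. This is false: if $|X_i| = |Y_i| = n/2$ and $i$ is the inside colour, then colour $i$ has roughly $2\binom{n/2}{3} \approx \binom{n}{3}/4$ edges, so it is large, both parts are linear, and it is nowhere near all of $K_n^{(3)}$. Worse, this situation is not an edge case --- it is precisely the generic configuration behind Example~\ref{construction}: one colour $c^*$ inside both parts of a balanced partition and another colour $d^*$ crossing it. Your comparison lemma only handles pairs of crossing colours and therefore never engages with this case, so the ``pushing the same comparison across all pairs'' step does not go through as written.

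To close the gap you need to (a) establish separately that at most one colour can be the inside colour of its own partition with both parts large (looking at a large cell of the common refinement does this, since edges inside that cell would need two colours simultaneously), and (b) show that any other large colour must be a crossing colour whose partition nearly agrees with the inside colour's partition. The paper achieves (a) cleanly via a Venn-diagram argument: it finds a set $Z$ of size at least $n/2^r$ lying in one part of \emph{every} partition, and the colour $c^*$ dominating inside $Z$ must then be the inside colour for its own partition $(X_{c^*},Y_{c^*})$, after which every other large colour $d$ is forced to be the crossing colour for $\chi_d$. Adopting this ordering (pin down the unique inside colour first, then compare each other partition to it) repairs your argument; as currently written the WLOG convention ``every colour is a crossing colour'' does not hold and the proposal does not cover the main case.
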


	\begin{proof}		
		Fix a new constant $\alpha$ such that $\delta \ll \alpha \ll \zeta, 1/r$.
		Let the colour-set be $[r]$ and assume that Item~\ref{item:few_gadgets} in the statement of the theorem does not hold, i.e.~that $K_n^{(3)}$ contains less than $\delta n^6$ gadgets.
		For a colour $c \in [r]$, let $\chi_c : E(K_n^{(3)}) \rightarrow \{c,\bar{c}\}$ be the colouring of $K_n^{(3)}$ which identifies the colours in $[r] \setminus \{c\}$ into one (new) colour $\bar{c}$. I.e., $\chi_c(e) = c$ if $\chi(e) = c$ and $\chi_c(e) = \bar{c}$ otherwise.
		Then $\chi_c$ has less than $\delta n^6$ gadgets, because a gadget in $\chi_c$ is also a gadget in the original colouring.
		Hence, by Theorem~\ref{thm:gadgets}, there is a partition $[n] = X_c \cup Y_c$ such that 
		after changing the colours of at most $\alpha n^3$ edges, all edges inside $X_c$ and $Y_c$ have colour $a_c$, and all edges intersecting both $X_c$ and $Y_c$ have colour $b_c$, where $\{a_c,b_c\} = \{c,\bar{c}\}$. We call $a_c$ the colour {\em inside} the parts $X_c,Y_c$, and $b_c$ the colour {\em across} the partition $(X_c,Y_c)$.
        
        In particular, by considering the Venn diagram of the partitions $(X_c,Y_c)_{c \in [r]}$ and averaging, we get a set $Z$ of size at least $n/2^r$ such that $Z \subseteq X_c$ or $Z \subseteq Y_c$ for each $c \in [r]$.
        By averaging, there is a colour $c^*$ such that at least $\frac{1}{r} \binom{|Z|}{3} \ge 2 \alpha n^3$ edges inside $Z$ have colour $c^*$.
        Therefore $c^*$ is the colour inside the part $X_{c^*}$ and $Y_{c^*}$, that is~$a_{c^*}=c^*$.
        If $|X_{c^*}| \le \zeta n/2$, then the number of edges of colour different from $c^*$ (in $\chi_{c^*}$ and thus in the original colouring of $K_n^{(3)}$) is at most $|X_{c^*}| n^2 + \alpha n^3 \leq \zeta n^3/2 + \alpha n^3 < \zeta n^3$, so Item~\ref{item:few_edges_colour_c} in the statement of the theorem holds (in fact, all colour classes but one are tiny).
        Similarly, Item~\ref{item:few_edges_colour_c} holds if $|Y_{c^*}| \le \zeta n/2$.
        Therefore, from now on, we can assume $|X_{c^*}|,|Y_{c^*}| > \zeta n/2$.

        The number of edges of colour distinct from $c^*$ (in $\chi_{c^*}$ and thus in the original colouring) is at least \begin{align*}
            \binom{|X_{c^*}|}{2} \cdot |Y_{c^*}| + |X_{c^*}| \cdot \binom{|Y_{c^*}|}{2} - \alpha n^3 &= \frac{|X_{c^*}| \cdot |Y_{c^*}| \cdot (n-2)}{2} - \alpha n^3 \\
            & \ge \zeta n^3/16 - \alpha n^3 \ge \zeta n^3/20\, ,
        \end{align*}
        where we used that $|X_{c^*}| + |Y_{c^*}| =n$, $|X_{c^*}|,|Y_{c^*}| > \zeta n/2$ and so $|X_{c^*}| \cdot |Y_{c^*}| > \zeta n^2/4$.
        By averaging, there is a colour $d^* \in [r] \setminus \{c^*\}$ such that the number of edges of colour $d^*$ is at least $\zeta n^3/(20r)$.

        We now show that $d^*$ is the colour across in $\chi_{d^*}$. (In fact, the same argument shows that for any $d \in [r] \setminus \{c^*\}$, $d$ is the colour across in $\chi_d$.)
        Let $A:=X_{c^*} \cap X_{d^*}$, $B:=X_{c^*} \cap Y_{d^*}$, $C:=Y_{c^*} \cap X_{d^*}$ and $D:=Y_{c^*} \cap Y_{d^*}$.
        Then at least one of these four sets has size at least $n/4$. Without loss of generality, suppose that $|A| \geq n/4$. 
        Since $A \subseteq X_{c^*}$ and $a_{c^*}=c^*$, the number of edges of colour $c^*$ inside $A$ is at least $\binom{|A|}{3}-\alpha n^3 \ge 2\alpha n^3$.
        Since $d^* \neq c^*$ and $A \subseteq X_{d^*}$, then $d^*$ cannot be the colour inside the parts in $\chi_{d^*}$, as wanted.        
        
        Moreover, we must have $|X_{d^*}|,|Y_{d^*}| > \zeta n/(5r)$ as otherwise, by a similar calculation as above, the number of edges of colour $d^*$ would be at most $\binom{|X_{d^*}|}{2} \cdot |Y_{d^*}| + |X_{d^*}| \cdot \binom{|Y_{d^*}|}{2} + \alpha n^3 < \zeta n^3/(20r)$, contradicting the choice of $d^*$.
        Next, we show that the partitions $(X_{c^*},Y_{c^*})$ and $(X_{d^*},Y_{d^*})$ do not differ too much, in the following sense.
	\begin{claim}
	\label{claim:partitions_are_related}
		Let $m:=\zeta n/(10r)$.
            We have $|A| < m$ or 
		$|B| < m$. 	
		Similarly, $|C| < m$ or 
		$|D| < m$.
	\end{claim}
	\begin{claimproof}[Proof of Claim~\ref{claim:partitions_are_related}]
            We only prove the first statement, the second one can be proved similarly.
            So suppose by contradiction that $|A|,|B| \geq m$. As $b_{d^*} = d^*$, the number of the edges across $(A,B)$ of colour $d^*$ is at least $\binom{|A|}{2} \cdot |B| - \alpha n^3 \ge 2 \alpha n^3$. On the other hand, as $A \cup B \subseteq X_{c^*}$ and $a_{c^*} = c^*$, all but at most $\alpha n^3$ of the edges inside $A \cup B$ have colour $c^*$, a contradiction. 
		\end{claimproof}        
	
	As already mentioned, we can assume that $|A| \ge n/4$.
        Then, by Claim~\ref{claim:partitions_are_related}, $|B| < \zeta n/(10 r)$.
        Since $B \cup D = Y_{d^*}$ and $|Y_{d^*}| > \zeta n/(5r)$, we must have $|D| > \zeta n /(10r)$.
        Therefore, again by Claim~\ref{claim:partitions_are_related}, we have $|C| < \zeta n /(10r)$. Then $|B \cup C| < \zeta n/(5r)$ and the number of edges touching $B \cup C$ is at most $|B \cup C| \cdot n^2 < \zeta n^3/(5r)$.
        As $a_{c^*} = c^*$, all but $\alpha n^3$ of the edges inside $A$ and inside $D$ have colour $c^*$. Also, as $b_{d^*} = d^*$, all but at most $\alpha n^3$ of the edges across $(A,D)$ have colour $d^*$. This means that the number of edges whose colour is neither $c^*$ nor $d^*$ is at most $2\alpha n^3 + \zeta n^3/(5r) < \zeta n^3$. 
        In particular, Item~\ref{item:few_edges_colour_c} in the statement of the theorem holds.
	\end{proof}

\section{Proofs of main theorems}
\label{sec:main_proofs}
We are now ready to prove our main theorems, concerning Steiner triple systems with high discrepancy in $r$-edge-colourings of $K_n^{(3)}$. 
We start with the case $r \ge 3$.

\begin{proof}[Proof of Theorem~\ref{thm:STS_more_colours}]
	Let $r \ge 3$ and 
	$
	1/n \ll \mu \ll \delta \ll \zeta, 1/r
	$. 
	Let $K_n^{(3)}$ be edge-coloured in $[r]$.
	Then by Theorem~\ref{thm:gadgets_3_colours}, there exists a colour $c$ such that the number of edges of colour $c$ is at most $\zeta n^3$, or there are at least $\delta n^6$ gadgets.
	In the first case, the result follows from Fact~\ref{fact:random_STS}. In the second case, we conclude using Theorem~\ref{thm:gadgets_to_discrepancy}.	
\end{proof}
\noindent 
Finally, we move to the case $r=2$.
\begin{proof}[Proof of Theorem~\ref{thm:STS_2_colours}]
	Given $\eta$, define new constants such that 
	\[
		1/n \ll \mu \ll \delta \ll \alpha \ll \beta \ll \gamma \ll \eta, 1/r\, .
	\]
	Let $K_n^{(3)}$ be $2$-edge-coloured. 
	By Theorem~\ref{thm:gadgets}, there are at least $\delta n^6$ gadgets or there is a partition $[n] = X \cup Y$ such that after changing the colour of at most $\alpha n^3$ edges, which we call the {\em bad} edges, all edges inside $X$ or inside $Y$ have the same colour, say red, and all edges intersecting both $X$ and $Y$ have the \nolinebreak other \nolinebreak colour, \nolinebreak say blue. 
	In the first case (i.e., if there are at least $\delta n^6$ gadgets), Theorem~\ref{thm:gadgets_to_discrepancy} immediately gives an STS with discrepancy at least $\mu n^2$.
	
	In the second case, we consider the sizes of $X$ and $Y$. Put $x := |X|/n$, so $|Y| = (1-x)n$. 
	Put $z := x^3 + (1-x)^3$.
	Then the number of red edges in $K_n^{(3)}$ is $(z + o(1))\binom{n}{3} \pm \alpha n^3 = 
	(z \pm 7\alpha)\binom{n}{3}$. Hence, if $|z - 1/2| \geq 2\beta$, then, as $\beta \gg \alpha$, the number of red edges is at least $(1/2+\beta)\binom{n}{3}$ or at most 
	$(1/2-\beta)\binom{n}{3}$.
	Now Fact~\ref{fact:random_STS} gives an STS with at least $\frac{1}{2} \cdot \frac{1}{3} \binom{n}{2} + \mu n^2$ edges of the same colour. 
	
	Suppose now that $|z - 1/2| < 2\beta$.
	The solutions to $x^3 + (1-x)^3 = 1/2$ are $x = \frac{3 \pm \sqrt{3}}{6} \in [0,1]$. By continuity, and as $\beta \ll \gamma$, we have that 
	$|x - \frac{3 + \sqrt{3}}{6}| \leq \gamma$ or 
	$|x - \frac{3 - \sqrt{3}}{6}| \leq \gamma$. These two cases are symmetric (by replacing $x$ with $1-x$), so assume that
	$|x - \frac{3 + \sqrt{3}}{6}| \leq \gamma$. Then, by moving $O(\gamma n)$ vertices between $X$ and $Y$, we can make sure that $|X| = \lfloor \frac{3+\sqrt{3}}{6}n \rfloor$. By recolouring the edges touching the moved vertices, as well as the at most $\alpha n^3$ bad edges, we obtain that all edges inside $X$ and inside $Y$ are red and all edges intersecting both $X,Y$ are blue. As $\alpha,\gamma \ll \eta$, this recolours at most $\eta n^3$ edges. After this recolouring, we have the construction given by Example~\ref{construction}, as required. 
\end{proof}

\section{Concluding remarks}
\label{sec:remarks}

It would be interesting to extend our results to general designs. 
Given $k,\ell \in \mathbb{N}$, a \emph{$(k,\ell)$-Steiner system} of order $n$ is a set $\cS$ of $k$-subsets of $[n]$ such that every $\ell$-subset of $[n]$ is contained in exactly one of the sets of $\cS$. Thus, an STS corresponds to $(k,\ell) = (3,2)$. When general Steiner systems exist was a long-standing open problem, until finally settled by Keevash~\cite{keevash:14}, who showed that they exist whenever $n$ is sufficiently large and satisfies a necessary ``divisibility condition''. An alternative proof of Keevash's result and a minimum degree version analogous to Theorem~\ref{thm:minimum_degree_triangle_decomposition} was proved in~\cite{GKLO:23}. 
It would be interesting to understand for which values of $k,\ell,r$ it holds that every $r$-edge-colouring of $K_n^{(k)}$ contains a $(k,\ell)$-Steiner system of order $n$ with high discrepancy (assuming that $n$ is such that a $(k,\ell)$-Steiner system of order $n$ exists).

We remark that an approach to investigate general Steiner systems could be via characterization results for specific hypergraph cut properties. 
In fact, this was our first strategy and gave a working proof of our main result, which later inspired our cleaner key structure theorem (Theorem~\ref{thm:gadgets}).
We provide here some details for the benefit of the reader. Fix a $2$-colouring of the edges of $K_n^{(3)}$.
If there is an equipartition of $[n]$ into three sets $A, B, C$ 
such that, among the edges which intersect exactly two of the sets $A, B, C$, some colour appears significantly more than half of the times, then using known nibble-type results and the absorption technique one could find a Steiner triple system with high discrepancy. 
Hence, we can assume that the colouring has the following \defn{cut property}: For every equipartition $A, B, C$ of $[n]$ each colour appears roughly the same number of times on edges which intersect exactly two of the sets $A, B, C$. 
By combining probabilistic and algebraic arguments with results from the theory of association schemes, in a similar way as in the work of Shapira and Yuster~\cite{SY:12}, we could characterise all the colourings having this cut property. 

\section*{Acknowledgements}
We thank the anonymous referees for their valuable comments.

\bibliographystyle{amsplain_v2.0customized}
\bibliography{References}

\providecommand{\bysame}{\leavevmode\hbox to3em{\hrulefill}\thinspace}
\providecommand{\MR}{\relax\ifhmode\unskip\space\fi MR }
% \MRhref is called by the amsart/book/proc definition of \MR.
\providecommand{\MRhref}[2]{%
  \href{http://www.ams.org/mathscinet-getitem?mr=#1}{#2}
}
\providecommand{\href}[2]{#2}
\begin{thebibliography}{10}

\bibitem{JFSH:25+}
J.~Ai, F.~He, S.~Im, and H.~Lee, \emph{On high discrepancy $1$-factorizations
  of complete graphs}, arXiv:2503.17176, 2025.

\bibitem{BCJP:20}
J.~Balogh, B.~Csaba, Y.~Jing, and A.~Pluh{\'a}r, \emph{On the discrepancies of
  graphs}, Electron. J. Combin.~\textbf{27} (2020), Paper No. 2.12, 14.

\bibitem{BCPT:21}
J.~Balogh, B.~Csaba, A.~Pluh{\'a}r, and A.~Treglown, \emph{A discrepancy
  version of the {H}ajnal--{S}zemer{\'e}di theorem}, Combin. Probab.
  Comput.~\textbf{30} (2021), 444--459.

\bibitem{BTZ-G:24}
J.~Balogh, A.~Treglown, and C.~Z\'arate-Guer\'en, \emph{A note on color-bias
  perfect matchings in hypergraphs}, SIAM J. Discrete Math.~\textbf{38} (2024),
  2543--2552.

\bibitem{BGKLMO:20}
B.~Barber, S.~Glock, D.~K\"uhn, A.~Lo, R.~Montgomery, and D.~Osthus,
  \emph{Minimalist designs}, Random Structures Algorithms~\textbf{57} (2020),
  47--63.

\bibitem{BKLO:16}
B.~Barber, D.~K\"uhn, A.~Lo, and D.~Osthus, \emph{Edge-decompositions of graphs
  with high minimum degree}, Adv. Math.~\textbf{288} (2016), 337--385.

\bibitem{Bradac:22}
D.~Brada{\v{c}}, \emph{Powers of {H}amilton cycles of high discrepancy are
  unavoidable}, Electron. J. Combin.~\textbf{29} (2022), P3.22.

\bibitem{BCG:23}
D.~Brada{\v{c}}, M.~Christoph, and L.~Gishboliner, \emph{Minimum degree
  threshold for {$H$}-factors with high discrepancy}, Electron. J.
  Combin.~\textbf{31} (2024), Paper No. 3.33, 82.

\bibitem{CD:07}
C.~J.~Colbourn and J.~H.~Dinitz (eds.), \emph{Handbook of {C}ombinatorial
  {D}esigns}, 2nd ed., Discrete Math. Appl. (Boca Raton), Chapman \& Hall/CRC,
  2006.

\bibitem{EFR:86}
P.~Erd\H{o}s, P.~Frankl, and V.~R{\"o}dl, \emph{The asymptotic number of graphs
  not containing a fixed subgraph and a problem for hypergraphs having no
  exponent}, Graphs Combin.~\textbf{2} (1986), 113--121.

\bibitem{EFLS:95}
P.~Erd\H{o}s, Z.~F\"{u}redi, M.~Loebl, and V.~T.~S\'{o}s, \emph{Discrepancy of
  trees}, Studia Sci. Math. Hungar.~\textbf{30} (1995), 47--57.

\bibitem{ES:72}
P.~Erd\H{o}s and J.~H.~Spencer, \emph{Imbalances in {$k$}-colorations},
  Networks~\textbf{1} (1971/72), 379--385.

\bibitem{FHLT:21}
A.~Freschi, J.~Hyde, J.~Lada, and A.~Treglown, \emph{A note on color-bias
  {H}amilton cycles in dense graphs}, SIAM J. Discrete Math.~\textbf{35}
  (2021), 970--975.

\bibitem{GGPS:25+}
L.~Gishboliner, S.~Glock, P.~Michaeli, and A.~Sgueglia, \emph{Defect and
  transference versions of the {A}lon--{F}rankl--{L}ov{\'a}sz theorem},
  arXiv:2503.05089, 2025.

\bibitem{GGS:23}
L.~Gishboliner, S.~Glock, and A.~Sgueglia, \emph{Tight {H}amilton cycles with
  high discrepancy}, Combin. Probab. Comput., to appear.

\bibitem{GKM_Hamilton}
L.~Gishboliner, M.~Krivelevich, and P.~Michaeli, \emph{Color-biased {H}amilton
  cycles in random graphs}, Random Structures Algorithms~\textbf{60} (2022),
  289--307.

\bibitem{GKM_trees}
\bysame, \emph{Discrepancies of spanning trees and {H}amilton cycles}, J.
  Combin. Theory Ser.~B~\textbf{154} (2022), 262--291.

\bibitem{GKLO:23}
S.~Glock, D.~K\"uhn, A.~Lo, and D.~Osthus, \emph{The existence of designs via
  iterative absorption: hypergraph {$F$}-designs for arbitrary~{$F$}}, Mem.
  Amer. Math. Soc.~\textbf{284} (2023), monograph 1406.

\bibitem{G:07}
W.~T.~Gowers, \emph{Hypergraph regularity and the multidimensional
  {S}zemer\'{e}di theorem}, Ann. of Math. (2)~\textbf{166} (2007), 897--946.

\bibitem{HLMPSTZ24+}
H.~{H{\`a}n}, R.~{Lang}, J.~P.~{Marciano}, M.~{Pavez-Sign{\'e}},
  N.~{Sanhueza-Matamala}, A.~{Treglown}, and C.~{Z{\'a}rate-Guer{\'e}n},
  \emph{{Colour-bias perfect matchings in hypergraphs}}, arXiv:2408.11016,
  2024.

\bibitem{JLR:00}
S.~Janson, T.~{\L}uczak, and A.~Ruci\'{n}ski, \emph{Random graphs},
  Wiley-Intersci. Ser. Discrete Math. Optim., Wiley-Interscience, 2000.

\bibitem{keevash:14}
P.~Keevash, \emph{The existence of designs}, arXiv:1401.3665, 2014.

\bibitem{kirkman:47}
T.~P.~Kirkman, \emph{On a problem in combinatorics}, Cambridge Dublin Math.
  J.~\textbf{2} (1847), 191--204.

\bibitem{LMX24+}
H.~{Lu}, J.~{Ma}, and S.~{Xie}, \emph{{Discrepancies of perfect matchings in
  hypergraphs}}, arXiv:2408.06020, 2024.

\bibitem{NRS:06}
B.~Nagle, V.~R\"{o}dl, and M.~Schacht, \emph{The counting lemma for regular
  k-uniform hypergraphs}, Random Structures Algorithms~\textbf{28} (2006),
  113--179.

\bibitem{nash-williams:70}
C.~St. J.~A.~Nash-Williams, \emph{An unsolved problem concerning decomposition
  of graphs into triangles}, In:~Combinatorial {T}heory and its {A}pplications
  {III} (P.~Erd\H{o}s, A.~R\'enyi, and V.T.~S\'os, eds.), North Holland, 1970,
  pp.~1179--1183.

\bibitem{RS:09}
V.~R\"{o}dl and M.~Schacht, \emph{Generalizations of the removal lemma},
  Combinatorica~\textbf{29} (2009), 467--501.

\bibitem{RS:04}
V.~R\"{o}dl and J.~Skokan, \emph{Regularity lemma for {$k$}-uniform
  hypergraphs}, Random Structures Algorithms~\textbf{25} (2004), 1--42.

\bibitem{RS:06}
\bysame, \emph{Applications of the regularity lemma for uniform hypergraphs},
  Random Structures Algorithms~\textbf{28} (2006), 180--194.

\bibitem{T:06}
T.~Tao, \emph{A variant of the hypergraph removal lemma}, J. Combin. Theory
  Ser. A~\textbf{113} (2006), 1257--1280.

\end{thebibliography}

\end{document}